\newcommand{\hada}{\star}
\newcommand{\PP} {{\mathbb{P}}}
\newcommand{\LL}{{\mathcal{L}} }
\title {Weak Hadamard Star Configurations and Apolarity}
\author{Iman Bahmani Jafarloo}
\address{ DISMA, Dipartimento di Scienze Matematiche, Politecnico di Torino, Corso Duca degli Abruzzi 24, 10129 Turin, Italy.  \newline
	 Dipartimento di Matematica, Universit\`a degli Studi di Torino, Via Carlo Alberto 10, 10123 Turin, Italy.}
\email{iman.bahmanijafarloo@polito.it}
\thanks{Corresponding author: Iman Bahmani Jafarloo}
\thanks{The first author acknowledges that the present research has been partially supported by MIUR grant Dipartimento di Eccellenza 2018-2022 (E11G18000350001).}
\author{Gabriele Calussi}
\address{Dipartimento di Matematica e Informatica, Universit\`a di Perugia, Via Luigi Vanvitelli 1, 06123 Perugia,  Italy. \newline  Dipartimento di Matematica e Informatica, Universit\`a di Firenze, Viale Morgagni 67/A, 50134 Firenze,  Italy.
	}
\email{gabriele.calussi@gmail.com}
\date{December 30, 2018}
\keywords{star configurations, Hadamard products, apolarity}
\subjclass[2010]{14T05, 14M99.}
\begin{document}
\maketitle
\begin{abstract}
Recently, E. Carlini, M.V. Catalisano, E. Guardo, A. Van Tuyl have introduced a new construction using the Hadamard product to present star configurations of codimension $ c $ of $\PP^n$ and which they called Hadamard star configurations. In this paper, we introduce a more general type of Hadamard star configuration. Any star configuration constructed by our approach is called a weak Hadamard star configuration. We classify weak Hadamard star configurations, and in the case $ c=n $, we investigate the existence of a (weak) Hadamard star configuration apolar to the generic homogeneous polynomials of degree $ d$.
\end{abstract}
\section{Introduction}
A codimension $ c $ star configuration in $ \PP^n$ is determined by a union of linear subspaces $U_1,\ldots,U_s$ each of codimension $ c $. When $ c=n $, a star configuration forms a set of $ s $ complete intersection points in $ \PP^n $, and its defining ideal has been studied widely. For instance, in \cite{GHM}, the authors compared the symbolic and ordinary powers of the ideal of star configuration, and in \cite{CGV} an investigation was done when a generic hypersurface contains a star configuration. More recently in \cite{BC}, the authors investigated whether a star configuration of codimension $ n $ represents a given generic homogeneous form of degree $ d $ as the sum of $ d $-th power of $ s $ linear forms, see Section \ref{AHS}.

Around 2010, in \cite{CMS,CTY}, the Hadamard product of matrices was extended to Hadamard product of varieties in the study of the geometry of Boltzmann machines.
\begin{definition}[\cite{CMS}]
Given any two subvarieties $ X $ and $ Y$ of a projective space $ \PP^n$, we define their Hadamard product $ X\hada Y $ to be the closure of the image of the rational map
$$X\times Y\dashrightarrow \PP^n, (A,B)\mapsto (a_0b_0:a_1b_1:\cdots:a_nb_n).$$
For any projective variety $ X $, we may consider its Hadamard square $ X^{[2]} = X \hada X $ and its higher Hadamard powers $ X^{[k]} = X\hada X^{[k-1]} $.
\end{definition}\vspace*{-0.5cm}
Its applications can be also found in tropical geometry \cite{FOW,MS}.
Recently in \cite{BCK}, the authors studied Hadamard product of linear spaces and obtained its connections with tropical geometry. Other papers that contributed to the study of Hadamard product of varieties include \cite{BCFL1,BCFL2,CCFL}. Using Hadamard product the authors in \cite[Theorem 4.7]{BCK} constructed a new family of star configurations of codimension $ n $. Indeed, they showed that the Hadamard square of a set of points on a given line in $ \PP^n $ is a star configuration of points. Later in \cite{CCGV} it was generalized for any codimension $ c $ and called Hadamard star configuration.

In this paper, we introduced star configurations of codimension $ c $ in $ \PP^n $ which are more general than Hadamard star configurations and we called them weak Hadamard star configuration. Using the genericity property of points under the standard Cremona transformation, we generalized \cite[Theorem 3.1]{CCGV} in Theorem \ref{teowhsc} and in Theorem \ref{thm411} we extended \cite[Theorem 2.17]{CCGV}.  Moreover, two important consequences of Theorem  \ref{teowhsc} are Corollaries \ref{cor1}, \ref{cor2} which characterize weak Hadamard star configurations.

This paper is organized as follows: in the next section we review some of the standard facts on Hadamard product and introduce weak Hadamard star configuration. In Section 3, by the standard Cremona transformation we prove some lemmas and useful tools for characterization of (weak) Hadamard star configuration. In Section 4, our main results are stated and proved (see the previous paragraph). In particular, we find the relation between the star configuration constructed via our approach and Hadamard star configuration. As in \cite{BC}, Section 5 is intended to motivate our investigation of the existence of a (weak) Hadamard star configuration of codimension $ n $ apolar to a given generic homogeneous form (see Lemma \ref{proapolar} and Example \ref{nicex}).

\textbf{Acknowledgements.} The first author thanks the Dipartimento di Matematica e Informatica, Universit\`a di Perugia, Italy for its hospitality and the Dipartimento di Scienze Matematiche, Politecnico di Torino, Italy for financial support.
The second author thanks the Politecnico of Torino for its hospitality and GNSAGA of INdAM and MIUR for their partial support. We also wish to thank Enrico Carlini, Giuliana Fatabbi and Anna Lorenzini for their help. 

\section{Preliminary definition}
In this section, we recall some relevant basic facts. Let $ S=\mathbb{C}[x_0,\ldots,x_n] $ be the standard graded polynomial ring. We denote by $ S_i $ the homogeneous degree $ i $ part of $ S $. For any homogeneous ideal $ I\subset S $, we denote by $ V(I)\subset\PP^n $ the variety defined by the vanishing locus of all elements of $ I $. 
\begin{definition}
Let $ r\geq n+1 $ and let $P=\{ P_1,\ldots,P_r\}$ be set a of points in $ \PP^n $. We say that $ P $ is in \emph{general position} if there exists no hyperplane containing any subset of $ n+1 $ distinct elements in $ P $.
\end{definition}
\begin{remark}\label{Remarkgenpoints}
From the definition it follows that $ P_1,\ldots,P_r $ are in general position if and only if the matrix $ \begin{pmatrix}
P_1& \cdots& P_r\\
\end{pmatrix} ^T$ has all non zero maximal minors.
\end{remark}
\begin{definition}
Let $ H_i=V(x_i)$ for $ i=0,\ldots n $ be the coordinate hyperplanes of $ \PP^n $. Let
$$\Delta_i=\bigcup_{0\leq j_1<\ldots<j_{n-i}\leq n}H_{j_1}\cap\ldots\cap H_{j_{n-i}}.$$
\end{definition}
\begin{definition}
Let $ \mathcal{L}=\{L_1,\dots,L_r\} $ be a set of linear forms in $ S_1 $. The set $ \LL $ is  \emph{generally linear} if the following hold
\begin{enumerate}
\item[$ \bullet $] $ \{L_1,\ldots,L_t\}\subseteq \LL $ and $ t\leq n $, then $\dim(L_1\cap\ldots\cap L_t)=n-t $,
\item[$\bullet$] $ \{L_1,\ldots,L_t\}\subseteq \LL $ and $ t> n $, then $L_1\cap\ldots\cap L_t=\emptyset $.
\end{enumerate}
\end{definition}
\begin{definition}
Let $ \mathcal{L}=\{L_1,\dots,L_r\}$ be a set of generally linear forms in $ S_1 $. For any $ c\in[n]:=\{1,\ldots,n\} $, the \emph{codimension $ c $ star configuration} or simply  \emph{star configuration} defined by $ \LL $ is:
$$\mathbb{X}_c(\mathcal{L})=\bigcup_{1\leq i_1<\ldots<i_c\leq r}V(L_{i_1},\ldots,L_{i_c}).$$
\end{definition}
Here, we recall some definitions from \cite{CCGV}.
\begin{definition}
Let $ A=[a_0:\cdots:a_n]$ and $B=[b_0:\cdots:b_n] $ be two points in $ \PP^n $. If $ a_ib_i\neq 0 $ for some $ i $, the \emph{Hadamard product} $ A\hada B $ of $ A $ and $ B $, is defined as
$$A\hada B=[a_0b_0:a_1b_1:\cdots:a_nb_n].$$ 
If $ a_ib_i = 0 $ for all $ i = 0,\ldots,n $ then we say $ A\hada B $ is not defined.
\end{definition}
\begin{definition}
Let $ \mathcal{L}=\{L_1,\dots,L_r\}\subset S_1 $ be a set of linear forms. We say that $ \mathcal{L} $ is a \emph{Hadamard set} if there exists a linear form $ L=a_0x_0+\cdots+a_nx_n \in S_1$ and $ P_1,\ldots,P_r $ points of $ \PP^n $ such that $ V(L_i)=P_i\hada V(L) $ for all $ i\in[r] $.
\end{definition}
\begin{remark}\label{rem2}
Let $ H =V(a_0x_0+\cdots+a_nx_n)$ be a hyperplane in $ \PP^n $. Let $ P=[p_0:\cdots:p_n]\in\PP^n\setminus\Delta_{n-1} $. Then $ P\hada H =V\left(\dfrac{a_0x_0}{p_0}+\cdots+\dfrac{a_nx_n}{p_n}\right)$, see \cite[Lemma 2.13]{CCGV}.	
\end{remark}
\begin{definition}
Let $ \mathcal{L}=\{L_1,\dots,L_r\}$ be a Hadamard set. We say $ \mathcal{L} $ is a \emph{strong Hadamard set} if $ P_i\in V(L) $ for all $ i\in[r] $.
\end{definition}
\begin{definition}
A star configuration $ \mathbb{X}_c(\mathcal{L}) $ is called a
\begin{itemize}
	\item[{(a)}] {weak Hadamard star configuration} (WHSC) if $ \mathcal{L} $ is a Hadamard set.
	\item[{(b)}]  {Hadamard star configuration} (HSC) if $ \mathcal{L} $ is a strong Hadamard set.
\end{itemize}
\end{definition}
\section{Properties of Standard Cremona transformation}
In this section we study some properties of the Standard Cremona transformation. We denote by $ \sigma:\PP^n \dashrightarrow \PP^n $ the Standard Cremona transformation $$ \sigma\left( [p_0:\cdots:p_n]\right)=\left[ \dfrac{1}{p_0}:\cdots:\dfrac{1}{p_n}\right].$$
\begin{definition}
The points $ P_r,\ldots,P_r\in \PP^n $  are generic points if there exists a open dense subset $ U\subseteq(\PP^n)^r $ such that $ (P_1,\ldots,P_r)\in U $.
\end{definition}
\begin{lemma}\label{lemmacremona}
If $ P_1,\ldots,P_r\in \PP^n\setminus\Delta_{n-1} $ with $ P_i=[p_0(i):\cdots:p_n(i)] $, then $ \sigma(P_1),\ldots,\sigma(P_r) $ are in general position if and only if the following matrix $ M $ has all non-zero maximal minors
$$ M=\begin{bmatrix}
\dfrac{1}{p_0(1)}& \cdots& \dfrac{1}{p_n(1)}\\
\vdots&\space& \vdots\\
\dfrac{1}{p_0(r)}&\cdots&\dfrac{1}{p_n(r)}
\end{bmatrix}_.$$
\end{lemma}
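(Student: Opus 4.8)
The plan is to recognize that the matrix $M$ is, up to nothing more than a change of notation, exactly the coordinate matrix attached to the points $\sigma(P_1),\ldots,\sigma(P_r)$, and then to invoke Remark \ref{Remarkgenpoints}. Accordingly, the argument reduces to two bookkeeping checks followed by a one-line citation.

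First I would confirm that $\sigma$ is defined at each $P_i$. Since $\Delta_{n-1}=\bigcup_{0\le j\le n}H_j$ is the union of all the coordinate hyperplanes, the hypothesis $P_i\in\PP^n\setminus\Delta_{n-1}$ says precisely that every coordinate $p_k(i)$ is nonzero. Hence $\sigma(P_i)=\left[\frac{1}{p_0(i)}:\cdots:\frac{1}{p_n(i)}\right]$ is a well-defined point of $\PP^n$, and the vector $\left(\frac{1}{p_0(i)},\ldots,\frac{1}{p_n(i)}\right)$ is a choice of homogeneous coordinates for it.

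Next I would observe that, by construction, the $i$-th row of $M$ is exactly this coordinate vector for $\sigma(P_i)$; that is, $M=\begin{pmatrix}\sigma(P_1)&\cdots&\sigma(P_r)\end{pmatrix}^T$. Applying Remark \ref{Remarkgenpoints} to the points $\sigma(P_1),\ldots,\sigma(P_r)$ (for which $r\ge n+1$, so that general position is defined) then yields at once that these points are in general position if and only if $M$ has all nonzero maximal minors, which is the assertion.

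There is no genuine obstacle here; the only point that requires a word of care is that the vanishing or non-vanishing of the maximal minors of $M$ does not depend on the chosen representatives: replacing a row by a nonzero scalar multiple (a different set of homogeneous coordinates for the same $\sigma(P_i)$) multiplies every maximal minor involving that row by the same nonzero scalar and leaves the others unchanged. Thus the condition that all maximal minors of $M$ are nonzero is well defined on the projective points, and the equivalence above is independent of all coordinate choices.
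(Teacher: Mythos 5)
Your proposal is correct and follows exactly the paper's argument: identify $M$ with the coordinate matrix $\begin{pmatrix}\sigma(P_1)&\cdots&\sigma(P_r)\end{pmatrix}^T$ and invoke Remark \ref{Remarkgenpoints}. The extra remarks on well-definedness (that $P_i\notin\Delta_{n-1}$ makes $\sigma(P_i)$ defined, and that non-vanishing of maximal minors is independent of the choice of homogeneous coordinates) are sound and only make explicit what the paper leaves implicit.
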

\begin{proof}
Observe that $ M=\begin{bmatrix}
\sigma(P_1)&\ldots&\sigma(P_r)
\end{bmatrix}^T$. Therefore the proof follows from Remark \ref{Remarkgenpoints}.
\end{proof}
\begin{lemma}\label{genericlemma}
If $ P_1,\ldots,P_r $ are generic points in $ \PP^n $, then $ \sigma(P_1),\ldots,\sigma(P_r) $ are in general position.
\end{lemma}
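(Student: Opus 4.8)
The plan is to reduce the statement, via Lemma \ref{lemmacremona}, to the nonvanishing of all maximal minors of the matrix $M$ whose $i$-th row is $\sigma(P_i)=[1/p_0(i):\cdots:1/p_n(i)]$, and then to show that each such minor is generically nonzero. I would work on the parameter space $X=(\PP^n)^r$, which is irreducible, and recall that ``generic points'' means points lying in some dense open subset $U\subseteq X$; I may freely shrink $U$ during the argument, since a finite intersection of dense open sets is again dense and open. In particular I may assume from the outset that every $P_i\in\PP^n\setminus\Delta_{n-1}$, so that each $\sigma(P_i)$ is defined and the hypotheses of Lemma \ref{lemmacremona} are met on $U$.

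Next, for each index set $I=\{i_0<\cdots<i_n\}\subseteq[r]$ I would introduce the cleared form of $\sigma$, namely the degree-$n$ assignment $P=[p_0:\cdots:p_n]\mapsto[\prod_{j\neq 0}p_j:\cdots:\prod_{j\neq n}p_j]$, and set $f_I$ to be the determinant of the $(n+1)\times(n+1)$ matrix whose $k$-th row is the cleared $\sigma(P_{i_k})$. Each $f_I$ is then a multihomogeneous polynomial in the coordinates of $P_{i_0},\ldots,P_{i_n}$, and on the locus where all coordinates are nonzero it differs from the corresponding maximal minor $\det[1/p_j(i_k)]$ of $M$ only by the nonzero scalar $\prod_{k,j}p_j(i_k)$. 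Thus on $U$ the vanishing of that minor is equivalent to the vanishing of $f_I$, and there are only $\binom{r}{n+1}$ such polynomials to control.

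The heart of the matter, and the step I expect to be the main obstacle, is to prove that each $f_I$ is not the zero polynomial. For this it suffices to exhibit a single configuration of $n+1$ points for which $\det[1/p_j(i_k)]\neq 0$. I would take $p_j(i_k)=a_k^{-j}$ for distinct nonzero scalars $a_0,\ldots,a_n$; then $[1/p_j(i_k)]=[a_k^{\,j}]_{k,j}$ is a Vandermonde matrix with distinct nodes, hence invertible, and all chosen coordinates are nonzero so the points lie in $\PP^n\setminus\Delta_{n-1}$. Therefore $f_I\not\equiv 0$, so $V(f_I)\subset X$ is a proper closed subset. Since $X$ is irreducible and there are finitely many $I$, the union $\bigcup_I V(f_I)$ together with the closed locus $\{\,\text{some } P_i\in\Delta_{n-1}\,\}$ is a proper closed subset of $X$; its complement is a dense open set on which, by Lemma \ref{lemmacremona} and Remark \ref{Remarkgenpoints}, the points $\sigma(P_1),\ldots,\sigma(P_r)$ are in general position, which is precisely the generic locus sought. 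One could instead argue abstractly that $\sigma$ is a birational involution, so $\sigma^{\times r}$ is birational on $X$ and preserves the nonempty open condition of general position (realized, e.g., by $r$ points on a rational normal curve); I prefer the explicit minor computation because it keeps the reduction through Lemma \ref{lemmacremona} completely transparent.
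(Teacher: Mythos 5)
Your proposal is correct and follows essentially the same route as the paper: reduce via Lemma \ref{lemmacremona} to the nonvanishing of the maximal minors of $M$, clear denominators to obtain multihomogeneous polynomials $f_I$ (the paper's $F_{\mathcal{I}}$), and conclude that the generic tuple avoids the proper closed union of their zero loci. The one place you go beyond the paper is in actually certifying $f_I\not\equiv 0$ with the explicit Vandermonde configuration $p_j(i_k)=a_k^{-j}$ --- the paper simply asserts this nonvanishing --- so your write-up is, if anything, more complete on the step that genuinely needs justification.
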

\begin{proof}
In order to prove that $ \sigma(P_1),\ldots,\sigma(P_r) $ are in general position, it suffices to show that all maximal minors of the matrix $ M $ in Lemma \ref{lemmacremona} are non-zero. For any subset $\mathcal{I}=\{i_1,\ldots,i_{n+1}\}$ of $ n+1 $ distinct elements of $[r] $, we define  $\lambda_{\mathcal{I}}:= \det(M_{\mathcal{I}}) $ where
$$ M_{\mathcal{I}}=\begin{bmatrix}
\sigma(P_{i_{1}})&\ldots&\sigma(P_{i_{n+1}})
\end{bmatrix}^T.$$
Indeed,  $ \lambda_{\mathcal{I}} $ is a maximal minor of $ M $. We define the multi-homogeneous polynomial
$$ F_{\mathcal{I}}=\lambda_{\mathcal{I}} p_0(i_1)\cdots p_n(i_1)\cdots p_0(i_{n+1})\cdots p_n(i_{n+1}) $$
in the multi-graded polynomial ring $$\mathbb{C}[p_0(1),\ldots,p_n(1),\ldots,p_0(r),\ldots,p_n(r)] $$ of multi-degree $ (n,\ldots,n, 0,\ldots,0) $. Since $ F_{\mathcal{I}} $ is non-zero, it follows that $ C_{\mathcal{I}}=V(F_{\mathcal{I}}) $ is a proper closed subset in $\PP^n\times\cdots\times\PP^n$, ($ r $ times) and their union  denoted by $ C $ is still a proper closed subset. Without loss of generality and using the genericity of $ P_1,\ldots, P_r $ we can assume that $(P_1,\ldots, P_r)\notin C $. From the definition 
$$ C=V\left(\bigcap <F_{\mathcal{I}}>\right),$$
 we conclude that $ F_{\mathcal{I}}( P_1,\ldots, P_r)\neq 0 $. It follows that $ \lambda_{\mathcal{I}}\neq 0 $. Hence, all maximal minors of the matrix $ M $ are non-zeros and the proof is now completed.
\end{proof}
\begin{remark}\label{remarkgeneralnogeneric}
Let $ P_1,\ldots,P_r\in\PP^n $ be in general position. Then $\sigma(P_1),\dots,\sigma(P_r)$ do not necessarily have the same property. In fact, assume that $ H $  is a generic hyperplane in $ \PP^n $. Then $\sigma(H)$ is a hypersurface of degree $d>1$ and so there exist $P_{i_1},\dots,P_{i_{n+1}}$ in $\sigma(H)$ which are in general position. Since $\sigma(\sigma (H))=H$, we deduce that $\sigma(P_{i_1}),\dots,\sigma(P_{i_{n+1}})$ are not in general position on $ \PP^n $.
\end{remark}
\begin{lemma}\label{lemmgenpointhyp}
Let $F$ be a degree $ d $ form in $ S $ and $n\geq 2$. If $V(F)$ is an irreducible hypersurface of degree $d>1$, then for all $k\in \mathbb{N}$ there exist $P_1,\dots,P_k\in V(F)$ such that $P_1,\dots,P_k$ are in general position.
\end{lemma}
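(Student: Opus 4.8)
The plan is to construct the points one at a time on $V(F)$, at each stage avoiding a finite collection of linear subspaces that would destroy general position. Throughout I use the elementary reformulation that a set of $m\ge n+1$ distinct points of $\PP^n$ is in general position if and only if every $n$ of them are linearly independent: if some $n$ of them spanned a subspace of dimension $\le n-2$, then together with any further point they would lie on a common hyperplane, and conversely $n+1$ points on a hyperplane force $n$ of them to be linearly dependent.

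The key geometric input is that $V(F)$ is not contained in any proper linear subspace $L\subsetneq\PP^n$. Since $n\ge 2$, the variety $V(F)$ is irreducible of dimension $n-1\ge 1$; if $\dim L<n-1$ the non-containment is immediate for dimension reasons, while if $L=H$ is a hyperplane then $V(F)\subseteq H$ would force $V(F)=H$ (two irreducible varieties of the same dimension, one contained in the other), contradicting $\deg V(F)=d>1=\deg H$. This is the step where the hypotheses $d>1$ and irreducibility are essential: note that for $d=1$ the statement is false, since all points of a fixed hyperplane already fail to be in general position as soon as there are more than $n$ of them.

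I then argue by induction on the number of points, maintaining at each stage a set $P_1,\dots,P_m\in V(F)$ that is in general position (interpreting this, for $m\le n$, as linear independence of the points). Given such a set, the choices of a next point $P_{m+1}$ that would violate general position all lie on the finite union $B$ of the linear subspaces spanned by the $n$-element subsets of $\{P_1,\dots,P_m\}$ (these span hyperplanes $H_S$, to be avoided so that no $n+1$ points become coplanar), together with, while $m<n$, the span of all chosen points (to be avoided to preserve independence). Each such subspace $L$ is proper, so $V(F)\cap L\subsetneq V(F)$ by the previous paragraph; since $V(F)$ is irreducible, it is not the union of the finitely many proper closed subsets $V(F)\cap L$, and hence $V(F)\setminus B$ is nonempty. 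Choosing $P_{m+1}\in V(F)\setminus B$ keeps the points distinct and preserves general position, and iterating until $m=k$ completes the construction.

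The main obstacle is precisely the non-degeneracy claim $V(F)\not\subseteq H$ for a hyperplane $H$; once that is in place the induction is routine, the only care needed being the bookkeeping that avoiding the hyperplanes $H_S$ rules out every way general position could fail. This follows because any offending hyperplane through $P_{m+1}$ and $n$ of the old points must pass through $n$ linearly independent old points and hence coincide with one of the finitely many $H_S$. Finally, the hypothesis $n\ge 2$ enters to guarantee $\dim V(F)\ge 1$, so that $V(F)$ is infinite and the process can be continued for arbitrarily large $k$.
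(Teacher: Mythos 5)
Your argument is essentially the paper's: the same induction, choosing each new point of $V(F)$ off the finitely many hyperplanes spanned by the $n$-element subsets of the points already chosen, with non-degeneracy of $V(F)$ (from irreducibility and $d>1$) guaranteeing that these proper closed intersections cannot cover the irreducible $V(F)$; you are in fact slightly more careful than the paper about justifying non-degeneracy and about why avoiding the $H_S$ rules out every failure of general position. One correction, though: the ``reformulation'' you announce at the outset is false. The right statement is that $m\geq n+1$ points are in general position if and only if every $n+1$ of them are linearly independent; with your version, $n+1$ distinct collinear points in $\PP^2$ have every pair linearly independent yet all lie on a hyperplane, so your converse claim that ``$n+1$ points on a hyperplane force $n$ of them to be linearly dependent'' is wrong. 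Fortunately you only ever invoke the true direction (general position implies every $n$-subset is independent and hence spans a genuine hyperplane $H_S$), so the induction itself, and the proof, survive intact.
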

\begin{proof}
We give a proof by induction on $k$. If $k=n+1$, then $n+1$ points are in general position if and only if they span $\PP^n$.
It is clear that $V(F)$ must be a non-degenerate hypersurface, and so there are $n+1$ points on $V(F)$ which span $\PP^n$.
Now assume that $k>n+1$ and by induction there exist $ k-1 $ points $P_1,\dots,P_{k-1}$ on $V(F)$ in general position. Let $V(L_1),\dots,V(L_t)$ be the hyperplanes generated by any choice of $n$ distinct points in $\{ P_1,\dots,P_{k-1}\}$. Since $V(F)$ is irreducible and not contained in any hyperplane $ V(L_i)$, we thus have that $V(L_i)\cap V(F)$ has dimension $n-2$ for all $i$. Hence, $V(F)\not\subset (V(L_1)\cup\cdots\cup V(L_t))$ and there exists $P_k\in V(F)\setminus (V(L_1)\cup\cdots\cup V(L_t))$. It follows that the points $P_1,\dots,P_{k}$ are in general position.
\end{proof}
\begin{lemma}\label{genericlemmahyp}
Let $ H\subset\PP^n$ be a generic hyperplane and let $ P_1,\ldots,P_r $ be generic points on $ H $. Then $ \sigma(P_1),\ldots,\sigma(P_r) $ are in general position.
\end{lemma}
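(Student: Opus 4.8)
The plan is to reduce to Lemma \ref{lemmgenpointhyp} applied to the image hypersurface $\sigma(H)$, once we observe that being in general position is an open condition. First I would record the geometry of $\sigma(H)$. The map $\sigma$ is a birational involution of $\PP^n$ whose indeterminacy locus is $\Delta_{n-2}$ (where at least two coordinates vanish), so for a generic hyperplane $H$, which is not contained in this codimension-two locus, the restriction $\sigma|_H\colon H\dashrightarrow\PP^n$ is a dominant rational map defined on a dense open subset of $H$. Since the closure of the image of an irreducible variety under a rational map is irreducible, and $\sigma$ preserves dimension, $\sigma(H)$ is an irreducible hypersurface. Writing $H=V(a_0x_0+\cdots+a_nx_n)$ with all $a_i\neq 0$, the involution property $\sigma\circ\sigma=\mathrm{id}$ gives, after clearing denominators in $\sum_i a_i/q_i=0$, the defining equation $\sigma(H)=V\left(\sum_{i=0}^n a_i\prod_{j\neq i}x_j\right)$, a form of degree $n>1$ (recall $n\geq 2$); this is exactly the content of Remark \ref{remarkgeneralnogeneric}. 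Hence $\sigma(H)$ is an irreducible hypersurface of degree $d>1$.

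Next I would produce a single good configuration on $H$. By Lemma \ref{lemmgenpointhyp} applied to the irreducible hypersurface $\sigma(H)$, the locus of $r$-tuples in general position on $\sigma(H)$ is non-empty, and being open it is dense in $(\sigma(H))^r$. Intersecting with the conditions that each point avoid the coordinate hyperplanes and the base locus $\Delta_{n-2}$ of $\sigma$ (proper closed conditions, since the hypersurface $\sigma(H)$ is contained in neither), I may choose points $Q_1,\dots,Q_r\in\sigma(H)$ in general position on which $\sigma$ is defined. Setting $P_i:=\sigma(Q_i)$, the involution gives $P_i\in\sigma(\sigma(H))=H$ and $\sigma(P_i)=Q_i$, so $(P_1,\dots,P_r)$ is a point of $H^r$ whose Cremona images $\sigma(P_1),\dots,\sigma(P_r)$ are in general position.

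Finally I would conclude by an openness-plus-irreducibility argument. By Lemma \ref{lemmacremona} the property that $\sigma(P_1),\dots,\sigma(P_r)$ are in general position is the non-vanishing of the maximal minors of the matrix $M$, hence a Zariski-open condition; pulling it back along the morphism $(P_1,\dots,P_r)\mapsto(\sigma(P_1),\dots,\sigma(P_r))$, which is defined on a dense open subset of $H^r$, yields an open locus $\mathcal{U}\subseteq H^r$. The second paragraph shows $\mathcal{U}\neq\emptyset$, and since $H^r$ is irreducible, $\mathcal{U}$ is dense and contains the generic point. Therefore generic points $P_1,\dots,P_r$ on $H$ have $\sigma(P_1),\dots,\sigma(P_r)$ in general position.

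The main obstacle is the first step: establishing that $\sigma(H)$ really is an irreducible hypersurface of degree $d>1$, so that Lemma \ref{lemmgenpointhyp} is applicable, together with the bookkeeping that keeps the chosen $Q_i$ in the domain where the involution $P_i=\sigma(Q_i)$ is well behaved. Once that geometric input is in place, the remainder is the routine principle that a non-empty open subset of an irreducible configuration space contains the generic point.
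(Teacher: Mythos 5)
Your proposal is correct and follows essentially the same route as the paper: both reduce the claim to the openness of the general-position condition on $H^r$ and establish non-emptiness by applying Lemma \ref{lemmgenpointhyp} to the irreducible hypersurface $\sigma(H)$ of degree $>1$, pulling back points $Q_i$ in general position via the involution to get $P_i=\sigma(Q_i)\in H$. Your version adds some welcome extra care (the explicit equation $\sum_i a_i\prod_{j\neq i}x_j$ of $\sigma(H)$ and the requirement that the $Q_i$ avoid $\Delta_{n-1}$ and the base locus of $\sigma$), but the underlying argument is the one in the paper.
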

\begin{proof}
This follows by the same method as in the proof of Lemma \ref{genericlemma}. With out loss of generality, let $a_0x_0+\dots+a_nx_n=0$ be the equation of $ H $ with $ a_i\neq 0 $ for $ i=0,\ldots,n $.
The hyperplane $ H $ vanishes on $ P_i=[p_0(i):\cdots:p_n(i)] $ for $ i=1,\ldots,r $ if and only if
\begin{equation}\label{pni}
p_n(i)=-(a_0p_0(i)+\dots+a_{n-1}p_{n-1}(i))/{a_n}.
\end{equation}
We now apply the same argument in the proof of Lemma \ref{genericlemma}, with $p_n(i)$ replaced by 
the linear combinations of $ p_0(i),\ldots,p_{n-1}(i) $, see Equation (\ref{pni}), to obtain the multi-homogeneous polynomial $ G_{\mathcal{I}} $ in the multi-graded polynomial ring $$\mathbb{C}[p_0(1),\ldots,p_{n-1}(1),\ldots,p_0(r),\ldots,p_{n-1}(r)] .$$ 
Thus the statement holds on:
$$\mathcal{A}=\underbrace{\PP^n\times\cdots\times\PP^n}_{r \ \text{times}}\setminus\left(\bigcup V(G_{\mathcal{I}})\right).$$
Now we prove that $\mathcal{A}$ is a non-empty open subset. In fact, the set $ V(G_{\mathcal{I}}) $ is not necessarily a proper set since $ G_{\mathcal{I}} $ might be zero. We know that $\sigma(H)$ is an irreducible hypersurface of degree $d>1$, so by Lemma \ref{lemmgenpointhyp}, there are $r$ points $ Q_1,\ldots,Q_r $ in general position on $\sigma(H)$. Take $ P_1,\ldots P_r $ such that $ \sigma(P_1)=Q_1,\ldots,\sigma(P_r)=Q_r $. Since $ \sigma(P_1),\ldots,\sigma(P_r) $ are in general position in $ \PP^n $, we have that $ (P_1,\ldots,P_r)\in\mathcal{A} $, so $ \mathcal{A} $ is non-empty.
\end{proof}
\begin{remark}\label{examplegeneralnogeneric}
Note that if the points $ P_1,\ldots,P_r $ are in general position on $ H $, then it does not guarantee that $\sigma(P_1),\ldots,\sigma(P_r)$ are in general position. For example, let $H=V(x_0+2x_1+3x_3-x_4)\subset\PP^3$ and consider the points $P_1=[1:2:3:14]$, $P_2=[1:1:1:6] $, $P_3=[-1:2:-2:-3]$ and $P_4=[-1:-2:190/33:135/11]$ in general position on $ H $. We have that $ \det( \begin{bmatrix}
\sigma(P_1)&\cdots&\sigma(P_4)
\end{bmatrix}^T)=0 $ and it follows that $\sigma(P_1),\ldots,\sigma(P_4)$ are not in general position.
\end{remark}
\section{Weak Hadamard star configuration}
Our goal in this section is to find the necessary and sufficient condition for a generally linear set of linear forms to be a WHSC.
\begin{definition}
Let $ L $ be a linear form. The {\em support} of $ L $ is the set of variables appearing in $ L $ with non-zero coefficient.
\end{definition}
\begin{proposition}\label{propsamesupport}
Let $\mathcal{L}=\{L_1,\dots,L_r\}$ be a generally linear set of linear forms in $ S_1 $. The set $\mathbb{X}_c(\mathcal{L}) $ is a WHSC if and only if  $V(L_i)\cap \Delta_0=\emptyset$ for all $i\in [r]$.
\end{proposition}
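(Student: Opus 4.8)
The plan is to first rephrase the geometric condition $V(L_i)\cap\Delta_0=\emptyset$ in terms of the coefficients of the $L_i$. Observe that $\Delta_0$ is exactly the set of the $n+1$ coordinate points of $\PP^n$, the $m$-th one being $E_m=[0:\cdots:0:1:0:\cdots:0]$ with the single $1$ in position $m$. Since $L_i(E_m)$ equals the coefficient of $x_m$ in $L_i$, we have $E_m\in V(L_i)$ precisely when that coefficient vanishes. Hence $V(L_i)\cap\Delta_0=\emptyset$ for every $i$ is equivalent to saying that each $L_i$ has \emph{full support}, i.e. all $n+1$ of its coefficients are non-zero. It therefore suffices to prove that $\mathbb{X}_c(\mathcal{L})$ is a WHSC if and only if every $L_i$ has full support.

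For the implication ($\Leftarrow$) I would exhibit an explicit Hadamard structure. Writing $L_i=\sum_j c_j(i)\,x_j$ with every $c_j(i)\neq 0$, set $L=x_0+\cdots+x_n$ and let $P_i$ be the point whose $j$-th coordinate is $1/c_j(i)$. Each $P_i$ has all coordinates non-zero, so $P_i\in\PP^n\setminus\Delta_{n-1}$ and Remark \ref{rem2} applies, yielding $P_i\hada V(L)=V\!\big(\sum_j \tfrac{1}{p_j(i)}\,x_j\big)=V(L_i)$. Thus $\mathcal{L}$ is a Hadamard set and $\mathbb{X}_c(\mathcal{L})$ is a WHSC.

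For ($\Rightarrow$), assume $\mathbb{X}_c(\mathcal{L})$ is a WHSC, so there are a single linear form $L=\sum_j a_j x_j$ and points $P_i\in\PP^n\setminus\Delta_{n-1}$ with $V(L_i)=P_i\hada V(L)$. Applying Remark \ref{rem2} again, $L_i$ is proportional to $\sum_j \tfrac{a_j}{p_j(i)}\,x_j$; since every $p_j(i)\neq 0$, the support of each $L_i$ coincides with the support of the one shared form $L$. The heart of the matter is to show this common support is full. If it were not, choose $m$ with $a_m=0$; then $x_m$ has vanishing coefficient in every $L_i$, so $E_m$ lies on $V(L_i)$ for all $i$, hence on $\bigcap_{i=1}^{r}V(L_i)$. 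As $r\geq n+1>n$, this contradicts the second defining property of a generally linear set, which forces the intersection of more than $n$ of the $V(L_i)$ to be empty. Therefore $L$, and with it every $L_i$, has full support, which is exactly $V(L_i)\cap\Delta_0=\emptyset$.

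The crux is precisely this forward direction: a single shared form $L$ rigidly pins one and the same support onto all the $L_i$, so a deficient support would be witnessed by a single coordinate point lying on every hyperplane, which general linearity forbids once $r>n$. The only delicate point is that Remark \ref{rem2} presupposes the Hadamard factors satisfy $P_i\in\PP^n\setminus\Delta_{n-1}$; this is exactly the regime in which the point--hyperplane product is defined, and the construction used in ($\Leftarrow$) already produces such $P_i$, so nothing is lost.
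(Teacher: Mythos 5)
Your argument is correct in substance and follows the same route as the paper's, but it is more self-contained: where the paper disposes of both implications by citing \cite[Remark 2.10]{CCGV}, you make the two key facts explicit. For ($\Leftarrow$) you actually produce the Hadamard structure ($L=x_0+\cdots+x_n$ and $P_i=[1/c_0(i):\cdots:1/c_n(i)]$, so that Remark \ref{rem2} gives $P_i\hada V(L)=V(L_i)$), and for ($\Rightarrow$) you run the same contradiction the paper does: a missing variable $x_m$ common to all the $L_i$ puts the coordinate point $E_m$ on every $V(L_i)$, which is incompatible with general linearity once more than $n$ hyperplanes are in play. (Note that both you and the paper silently use $r\geq n+1$ at this step; for $r\leq n$ a generally linear set may well consist of forms all vanishing at $E_m$.)

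The one step you should not wave away is the assumption $P_i\in\PP^n\setminus\Delta_{n-1}$ in the forward direction. Your justification --- that this is ``exactly the regime in which the point--hyperplane product is defined'' --- is not true: if $P$ has a single zero coordinate, say $p_j=0$, and $E_j\notin V(L)$, then $P\hada V(L)$ is perfectly well defined and equals the coordinate hyperplane $V(x_j)$, since the map $B\mapsto[p_0b_0:\cdots:p_nb_n]$ restricted to $V(L)$ is the projection from $E_j$ followed by a coordinate scaling, hence dominant onto $V(x_j)$. For instance, in $\PP^2$ with $L=x_0+x_1+x_2$ the point $[0:1:1]$ yields $V(x_0)$, which meets $\Delta_0$; such a factor can coexist with full-support factors in a generally linear family. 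So under the literal definition of Hadamard set given here, the degenerate case is not automatically excluded, and you must either adopt the convention (implicit in the paper's appeal to \cite[Remark 2.10]{CCGV}) that the witnesses $P_i$ lie outside $\Delta_{n-1}$, or rule out such factors by a separate argument. This is not a defect relative to the paper, which leans on the same convention, but the reason you give for dismissing the case is incorrect, and the case is precisely where the equivalence would otherwise break.
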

\begin{proof}
If $V(L_i)\cap \Delta_0=\emptyset$ for all $i\in [r]$, then from \cite[Remark 2.10]{CCGV} we conclude that  $\mathbb{X}_c(\mathcal{L})  $ is a WHSC. Conversely, if $\mathbb{X}_c(\mathcal{L}) $ is a WHSC, then from \cite[Remark 2.10]{CCGV} it follows all the linear forms $ L_i $ have the same support. By contradiction, suppose that there exists $t\in[r]$ such that $V(L_t)\cap\Delta_0\neq \emptyset$. The fact that $ L_i $'s have the same support implies that there is at least one zero coefficient in their support. With out loss of generality, assume that the first coefficients are zero. It follows that $ L_i\in \mathbb{C}[x_1,\ldots,x_n] $ for all $i\in[r] $ which is impossible since $ \LL $ is generally linear in $ S_1 $.
\end{proof}
Let $ H_1,\ldots,H_r $ be hyperplanes of $ \PP^n $. We denote by
$$\mathbb{X}_c(H_1,\ldots,H_r)=\bigcup_{1\leq i_1<\cdots< i_c\leq r}H_{i_1}\cap\cdots\cap H_{i_c}.$$

\begin{theorem}\label{teowhsc}
Let $ H\subset\PP^n $ be a hyperplane such that $ H\cap \Delta_0=\emptyset $. Consider  $ P_1,\ldots,P_r\in \PP^n\setminus\Delta_{n-1}$ and set $ H_j=P_j\hada H$ where $ P_j=[p_0(j):\cdots:p_n(j)] $ for all $ j\in[r]$. Then $ \mathbb{X}_c(H_1,\ldots,H_r)$ is a WHSC if and only if the points $ \sigma(P_1),\ldots,\sigma(P_r) $ are in general position in $ \PP^n $.
\end{theorem}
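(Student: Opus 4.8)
The plan is to reduce the statement to the determinantal criterion of Lemma \ref{lemmacremona}. First I would make the construction explicit. Since $H\cap\Delta_0=\emptyset$, writing $H=V(a_0x_0+\cdots+a_nx_n)$ forces every $a_i\neq 0$, and since each $P_j\in\PP^n\setminus\Delta_{n-1}$ has all coordinates nonzero, Remark \ref{rem2} gives
$$H_j=P_j\hada H=V\!\left(\frac{a_0}{p_0(j)}x_0+\cdots+\frac{a_n}{p_n(j)}x_n\right)=:V(L_j).$$
By construction $\LL=\{L_1,\ldots,L_r\}$ is a Hadamard set, so $\mathbb{X}_c(H_1,\ldots,H_r)=\mathbb{X}_c(\LL)$ is a WHSC precisely when it is a star configuration, i.e. precisely when $\LL$ is generally linear. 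Note moreover that each $L_j$ has full support, so $V(L_j)\cap\Delta_0=\emptyset$; thus once $\LL$ is generally linear, Proposition \ref{propsamesupport} confirms the WHSC property with no extra work. Hence the theorem reduces to showing that $\LL$ is generally linear if and only if $\sigma(P_1),\ldots,\sigma(P_r)$ are in general position.

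Next I would translate general linearity into a minor condition. A set of linear forms is generally linear exactly when any $n+1$ of their coefficient vectors are linearly independent, which, dually to Remark \ref{Remarkgenpoints}, is equivalent to all maximal minors of the $r\times(n+1)$ coefficient matrix
$$N=\begin{bmatrix}\dfrac{a_0}{p_0(1)}&\cdots&\dfrac{a_n}{p_n(1)}\\ \vdots& &\vdots\\ \dfrac{a_0}{p_0(r)}&\cdots&\dfrac{a_n}{p_n(r)}\end{bmatrix}$$
being nonzero. The key observation is the factorization $N=M\,\mathrm{diag}(a_0,\ldots,a_n)$, where $M$ is exactly the Cremona matrix of Lemma \ref{lemmacremona}. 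Since every $a_i\neq 0$, for each index set $\mathcal{I}$ of size $n+1$ we get $\det(N_{\mathcal{I}})=\big(\prod_{i=0}^{n}a_i\big)\det(M_{\mathcal{I}})$, so the maximal minors of $N$ and of $M$ vanish simultaneously.

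Finally I would invoke Lemma \ref{lemmacremona}: all maximal minors of $M$ are nonzero if and only if $\sigma(P_1),\ldots,\sigma(P_r)$ are in general position. Chaining the equivalences --- WHSC $\Leftrightarrow$ $\LL$ generally linear $\Leftrightarrow$ all maximal minors of $N$ nonzero $\Leftrightarrow$ all maximal minors of $M$ nonzero $\Leftrightarrow$ $\sigma(P_j)$ in general position --- completes the argument. I expect the only genuinely delicate point to be the bookkeeping in the first paragraph: correctly separating the two requirements hidden in ``WHSC'' (being a star configuration versus being a Hadamard set) and recognizing that the Hadamard-set condition is automatic from the construction, so that the entire content collapses to general linearity. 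Once that reduction is in place, the remaining steps are the standard diagonal-scaling determinant computation together with a direct citation.
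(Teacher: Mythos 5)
Your proposal is correct and follows essentially the same route as the paper: both reduce the statement to showing that $\LL$ is generally linear precisely when all maximal minors of the Cremona matrix $M$ of Lemma \ref{lemmacremona} are nonzero. Where the paper argues via an explicit nontrivial linear dependence $\sum_j\lambda_jL_j=0$ and the resulting homogeneous system, your factorization $N=M\,\mathrm{diag}(a_0,\ldots,a_n)$ packages the same computation more cleanly (and sidesteps the paper's slightly imprecise assertion that all the coefficients $\lambda_j$ in such a dependence can be taken nonzero).
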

\begin{proof}
Assume that $ H=V(a_0x_0+\cdots+a_nx_n) $  with $ a_i\neq0 $ for all $ i=0,\dots,n$.  Let $ \mathcal{L}=\{L_1,\ldots,L_r\} $ be a set of linear forms in $ S_1 $ where
$$ L_j=\dfrac{a_0x_0}{p_0(j)}+\cdots+\dfrac{a_nx_n}{p_n(j)},\quad \forall j\in[r]. $$
Set $ H_j=V(L_j) $ since $ V(L_j)=P_j\hada H $. What remains to prove is: the set $\mathcal{L}$ is generally linear if and only if $ \sigma(P_1),\ldots,\sigma(P_r) $ are in general position. Suppose that $ \mathcal{L}$ is not generally linear, i.e., there exist $ n+1 $ distinct elements in $\LL$ which are linearly dependent, say $ L_1,\ldots, L_{n+1}$. Therefore, there exist $ \lambda_j\neq 0 $ for $ j=1,\ldots,n+1 $ such that $ \sum\limits_{j=1}^{n+1}\lambda_jL_j=0 $. Hence,
\begin{flalign}\label{eq2}
\sum\limits_{j=1}^{n+1}\lambda_jL_j=&\sum_{j=1}^{n+1}\lambda_j\left( \dfrac{a_0x_0}{p_0(j)}+\cdots+\dfrac{a_nx_n}{p_n(j)}\right)\\ 
=&\sum_{i=0}^{n}\left( \dfrac{\lambda_1}{p_i(1)}+\cdots+\dfrac{\lambda_{n+1}}{p_i(n+1)}\right)a_ix_i=0.\nonumber
\end{flalign}
Since $ a_i\neq 0$ for all $i=0,\dots,n$, we get the following system:
\begin{equation}\label{eq3}
\left\lbrace \begin{array}{c}
\dfrac{\lambda_1}{p_0(1)}+\cdots+\dfrac{\lambda_{n+1}}{p_0(n+1)}=0\\
\vdots\\
\dfrac{\lambda_1}{p_n(1)}+\cdots+\dfrac{\lambda_{n+1}}{p_n(n+1)}=0
\end{array}\right..
\end{equation} 
We conclude from $ \lambda_j\neq 0 $ for $ j=1,\ldots,n+1 $ that the system has not only the trivial solution, hence that
 \begin{equation}\label{eq4}
\det\left( \begin{bmatrix}
\dfrac{1}{p_0(1)}& \cdots& \dfrac{1}{p_n(1)}\\
\vdots&\space& \vdots\\
\dfrac{1}{p_0(n+1)}&\cdots&\dfrac{1}{p_n(n+1)}
\end{bmatrix}\right)=0,
 \end{equation}
and finally by Lemma \ref{lemmacremona} that $ \sigma(P_1),\ldots,\sigma(P_r)  $ are not in general position. 

Conversely, suppose that $ \sigma(P_1),\ldots,\sigma(P_r)  $ are not in general position. So there exists a choice of $ n+1 $ distinct elements of $ \sigma(P_1),\ldots,\sigma(P_r) $ which lie on a hyperplane and without loss of generality we can assume $ \sigma(P_1),\ldots,\sigma(P_{n+1}) $. It implies that \begin{equation}\label{eq5}
\det\left(\begin{bmatrix}
	\sigma(P_1)&\ldots&\sigma(P_{n+1})
\end{bmatrix}^T\right)=0.
\end{equation}
By definition, (\ref{eq4}) follows from (\ref{eq5}) and since $ \lambda_j\neq 0 $ for $ j=1,\ldots,n+1 $, (\ref{eq4}) shows that (\ref{eq3}) holds and so (\ref{eq2}).
We deduce from \ref{eq2} that there exit $ n+1 $ distinct elements in $ \LL $ which are not linearly independent, hence that $ \LL $ is not generally linear.
\end{proof}
\begin{remark} Note that if $\sigma(P_1),\dots,\sigma(P_r)$ are in general position, then the points $\sigma(P_i)$, $\sigma(P_j)$, and $\sigma(P_k)$ are not collinear for all possible choices of $1 \leq i < j < k \leq r$. As in \cite[Theorem 4.3]{CCGV}, there is no rational normal curve containing the coordinates points and the points  $P_i$, $P_j$, and $P_k$.
\end{remark}
\begin{corollary}\label{cor1}
Let $ P_1\ldots,P_r $ be generic points in $ \PP^n $. Let $ H $ be a hyperplane such that $ H\cap \Delta_0=\emptyset $ and set $ H_i=P_i\hada H$ for all $ i\in[r]$. Then $ \mathbb{X}_c(H_1,\ldots,H_r)$ is a WHSC.
\end{corollary}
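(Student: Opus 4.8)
The plan is to obtain the corollary as a direct specialization of Theorem \ref{teowhsc} to generic points, using Lemma \ref{genericlemma} to supply the general-position hypothesis. First I would check that the setup of Theorem \ref{teowhsc} applies. The hyperplane $H$ already satisfies $H\cap\Delta_0=\emptyset$ by assumption, so I only need the points to lie in $\PP^n\setminus\Delta_{n-1}$. Since $\Delta_{n-1}$ is the union of the coordinate hyperplanes, it is a proper closed subset of $\PP^n$, and generic points $P_1,\ldots,P_r$ automatically avoid it. In particular each Hadamard product $H_i=P_i\hada H$ is a well-defined hyperplane by Remark \ref{rem2}, and the hypothesis $P_i\in\PP^n\setminus\Delta_{n-1}$ of Theorem \ref{teowhsc} holds.

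Next I would invoke Lemma \ref{genericlemma}: because $P_1,\ldots,P_r$ are generic, their images $\sigma(P_1),\ldots,\sigma(P_r)$ under the standard Cremona transformation are in general position in $\PP^n$. With this in hand I would apply the ``if'' direction of Theorem \ref{teowhsc}, whose three hypotheses are now all verified: $H\cap\Delta_0=\emptyset$, the points $P_i$ lie outside $\Delta_{n-1}$, and $\sigma(P_1),\ldots,\sigma(P_r)$ are in general position. The theorem then yields at once that $\mathbb{X}_c(H_1,\ldots,H_r)$ is a WHSC, which is the assertion.

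I expect no genuine obstacle here, since the corollary is essentially the generic instance of Theorem \ref{teowhsc}. The only point deserving a word of care is that the word ``generic'' must be strong enough to furnish, simultaneously, both the avoidance of $\Delta_{n-1}$ and the conclusion of Lemma \ref{genericlemma}. This is automatic: each requirement fails only on a proper closed subset of the product $\underbrace{\PP^n\times\cdots\times\PP^n}_{r\ \text{times}}$, so their union is still a proper closed subset, and a generic choice of $(P_1,\ldots,P_r)$ lies in its complement. Thus both conditions hold for the same generic tuple, and the two cited results combine without any further computation.
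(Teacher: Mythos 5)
Your proof is correct and follows exactly the paper's route: invoke Lemma \ref{genericlemma} to place $\sigma(P_1),\ldots,\sigma(P_r)$ in general position and then apply Theorem \ref{teowhsc}. The extra care you take in checking that generic points avoid $\Delta_{n-1}$ and that the two genericity requirements are simultaneously satisfiable is a reasonable elaboration of the paper's one-line argument, not a different approach.
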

\begin{proof}
It is enough to use Lemma \ref{genericlemma} and Theorem \ref{teowhsc}.
\end{proof}
\begin{remark}\label{rem6}
Note that being the points $P_1\ldots,P_r $ in general position is necessary but not sufficient to conclude that $ \mathbb{X}_c(H_1,\ldots,H_r)$ is a WHSC. Indeed, from Remark 
\ref{remarkgeneralnogeneric} there exist the points $P_1\ldots,P_r \in \PP^n$ in general position  such that  $ \sigma(P_1),\ldots,\sigma(P_r)  $ are not in general position and so $ \mathbb{X}_c(H_1,\ldots,H_r)$ is not a WHSC.
\end{remark}
\begin{corollary}\label{cor2}
Let $ H\subset\PP^n $ be a hyperplane such that $ H\cap\Delta_0=\emptyset $. Consider $ P_1,\ldots,P_r\in H\setminus \Delta_{n-1} $ and let $ H_i=P_i\hada H$ for all $ i\in[r]$. Then $ \mathbb{X}_c(H_1,\ldots,H_r)$ is a HSC if and only if $  \sigma(P_1),\ldots,\sigma(P_r) $ are in general position in $ \PP^n $.
\end{corollary}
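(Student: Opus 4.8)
The plan is to obtain this statement as a refinement of Theorem \ref{teowhsc}. Its hypotheses hold verbatim: $H \cap \Delta_0 = \emptyset$ is assumed, and $P_1,\ldots,P_r \in H \setminus \Delta_{n-1}$ forces $P_i \in \PP^n \setminus \Delta_{n-1}$. The only new feature relative to the theorem is the hypothesis $P_i \in H$, and the point of the corollary is that this extra condition is exactly what promotes a weak Hadamard star configuration to a Hadamard star configuration. Concretely, I would fix the presentation used in the proof of Theorem \ref{teowhsc}: write $H = V(L)$ with $L = a_0 x_0 + \cdots + a_n x_n$ and all $a_i \neq 0$ (possible since $H \cap \Delta_0 = \emptyset$), and let $L_i$ be the form with $V(L_i) = H_i = P_i \hada H$, so that $\mathcal{L} = \{L_1,\ldots,L_r\}$ is a Hadamard set by construction. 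The key observation is that, because $P_i \in H = V(L)$ for every $i$ by hypothesis, this very representation already meets the defining condition of a \emph{strong} Hadamard set; hence, whenever $\mathcal{L}$ is generally linear (equivalently, whenever $\mathbb{X}_c(H_1,\ldots,H_r)$ is a genuine star configuration), it is automatically a HSC and not merely a WHSC.

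With this observation the two implications are immediate. For the ``if'' direction, assume $\sigma(P_1),\ldots,\sigma(P_r)$ are in general position; Theorem \ref{teowhsc} then gives that $\mathbb{X}_c(H_1,\ldots,H_r)$ is a WHSC, so $\mathcal{L}$ is generally linear, and the observation above upgrades this to a HSC. For the ``only if'' direction, a HSC is in particular a WHSC (a strong Hadamard set is a Hadamard set), so Theorem \ref{teowhsc} applied in reverse forces $\sigma(P_1),\ldots,\sigma(P_r)$ to be in general position.

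Since the argument merely rearranges the content of Theorem \ref{teowhsc}, I do not expect a genuine obstacle. The one point deserving care is to confirm that being a HSC reduces to being a WHSC together with $P_i \in V(L)$: one must use that the concrete representation $V(L_i) = P_i \hada H$ furnished by the construction is itself the witness of the strong condition, rather than appealing to the existence of some other strong representation. As the given $P_i$ lie on $H$ by assumption, this is automatic, but it is worth stating explicitly to keep the passage between WHSC and HSC logically tight.
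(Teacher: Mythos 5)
Your proposal is correct and follows essentially the same route as the paper: apply Theorem \ref{teowhsc} to obtain the equivalence with a WHSC, then observe that the hypothesis $P_i\in H$ makes the given Hadamard set a strong one, so the configuration is in fact a HSC. Your explicit remark that the concrete representation $V(L_i)=P_i\hada H$ itself witnesses the strong condition is a useful clarification, but the argument is the paper's.
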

\begin{proof}
From Theorem \ref{teowhsc}, we have that $ \mathbb{X}_c(H_1,\ldots,H_r)$ is a WHSC if and only if $  \sigma(P_1),\ldots,\sigma(P_r) $ are in general position in $ \PP^n $. But $ \mathbb{X}_c(H_1,\ldots,H_r)$ is a HSC too since by hypothesis $P_i\in H$ for all $i\in[r]$.
\end{proof}
\begin{corollary}\label{cor3}
Let $ H\subset\PP^n $ be a hyperplane such that $ H\cap\Delta_0=\emptyset $. Let $ P_1,\ldots,P_r$ be generic points in $H$ and set $ H_i=P_i\hada H$ for all $ i\in[r]$. Then $ \mathbb{X}_c(H_1,\ldots,H_r)$ is a WHSC.
\end{corollary}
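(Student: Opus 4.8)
The plan is to reduce the statement to Theorem~\ref{teowhsc} by verifying its two hypotheses, namely that the $P_i$ lie in $\PP^n\setminus\Delta_{n-1}$ and that $\sigma(P_1),\ldots,\sigma(P_r)$ are in general position. First I would record that the hypothesis $H\cap\Delta_0=\emptyset$ is equivalent to requiring that, writing $H=V(a_0x_0+\cdots+a_nx_n)$, every coefficient $a_i$ is non-zero; this is exactly the normalization used at the start of the proof of Lemma~\ref{genericlemmahyp}. Next, since the $P_i$ are generic points of $H$ and $H\cap\Delta_{n-1}$ is a proper closed subset of $H$ (as $H\neq V(x_j)$ for every $j$, because all $a_i\neq 0$), genericity guarantees $P_i\in H\setminus\Delta_{n-1}\subseteq\PP^n\setminus\Delta_{n-1}$, so the $H_i=P_i\hada H$ are honest hyperplanes by Remark~\ref{rem2}.

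The core of the argument is to apply Lemma~\ref{genericlemmahyp} to conclude that $\sigma(P_1),\ldots,\sigma(P_r)$ are in general position. The only place the genericity of the hyperplane enters the proof of that lemma is through the assertion that $\sigma(H)$ is an irreducible hypersurface of degree $d>1$, which then feeds Lemma~\ref{lemmgenpointhyp}. I would check that this assertion already follows from $H\cap\Delta_0=\emptyset$ together with $n\geq 2$: substituting $y_i=1/x_i$ shows $\sigma(H)=V\!\left(\sum_{i=0}^{n}a_i\prod_{j\neq i}y_j\right)$, a hypersurface of degree $n>1$, and since this polynomial is linear in each $y_i$ with the monomial $a_i\prod_{j\neq i}y_j$ missing $y_i$ (and $a_i\neq 0$), a short factorization argument shows that no variable divides both coefficients of $y_i$, whence the polynomial is irreducible. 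Thus the proof of Lemma~\ref{genericlemmahyp} applies verbatim to our fixed $H$, not merely to a generic one, and the generic $r$-tuple $(P_1,\ldots,P_r)$ on $H$ lands in the non-empty open set $\mathcal{A}$, so $\sigma(P_1),\ldots,\sigma(P_r)$ are in general position.

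With both hypotheses of Theorem~\ref{teowhsc} in hand, I would conclude directly that $\mathbb{X}_c(H_1,\ldots,H_r)$ is a WHSC. Alternatively, since $P_i\in H$ for all $i$, one may invoke Corollary~\ref{cor2} to see that it is in fact a HSC, and a fortiori a WHSC. The main obstacle I anticipate is precisely the bookkeeping of the previous paragraph: the statement of Lemma~\ref{genericlemmahyp} is phrased for a \emph{generic} hyperplane, so some care is needed to confirm that its proof relies only on the weaker condition $H\cap\Delta_0=\emptyset$, equivalently on the irreducibility of $\sigma(H)$, which is what licenses its use for the specific $H$ fixed in the statement.
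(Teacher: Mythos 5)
Your proof is correct and follows essentially the same route as the paper, which deduces the corollary from Lemma \ref{genericlemmahyp} together with Corollary \ref{cor2}. Your additional care in checking that the proof of Lemma \ref{genericlemmahyp} relies only on $H\cap\Delta_0=\emptyset$ (through the irreducibility of the degree-$n$ hypersurface $\sigma(H)$) rather than on full genericity of $H$ is a worthwhile refinement, since the corollary is stated for an arbitrary hyperplane avoiding $\Delta_0$ while that lemma is stated only for a generic one.
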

\begin{proof}
The proof follows from Lemma \ref{genericlemmahyp} and Corollary \ref{cor2}.
\end{proof}
\begin{remark}
As in Remark \ref{rem6}, being the points $P_1,\ldots,P_r $ in general position in $ H $ is not sufficient to conclude that $ \mathbb{X}_c(H_1,\ldots,H_r)$  is a HSC. To be more precise, by Remark \ref{examplegeneralnogeneric}, there exist the points $P_1,\ldots,P_r $ in general position in $H$ such that $ \sigma(P_1),\ldots,\sigma(P_r)  $ are not in general position in $ \PP^n $, and so $ \mathbb{X}_c(H_1,\ldots,H_r)$  is not a HSC.
\end{remark}
\begin{definition}
If $ \mathbb{X} $ is a finite set of points in $ \PP^n$, then the $ r $-th \emph{square-free} Hadamard product of $ \mathbb{X} $ is 
$$\mathbb{X}^{\underline{\hada}r}=\{P_1\hada\cdots\hada P_r| P_i\in\mathbb{X}\ \text{and}\ P_i\neq P_j\}.$$
\end{definition}
\begin{theorem}\label{teoxweak}
Let $\ell$ be a line in $\PP^n$ such that $\ell\cap\Delta_{n-2}=\emptyset$, and let $\mathbb{X}\subseteq\ell$ be a set of $m>n$ points with $\mathbb{X}\cap\Delta_{n-1}=\emptyset$. Then $\mathbb{X}^{\underline{\hada}n}$ is a WHSC.
\end{theorem}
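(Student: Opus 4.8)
The plan is to exhibit $\mathbb{X}^{\underline{\hada}n}$ as an explicit codimension-$n$ star configuration of the shape appearing in Theorem \ref{teowhsc}, and then quote that theorem. Write $\mathbb{X}=\{P_1,\dots,P_m\}$ with $P_i=[p_0(i):\cdots:p_n(i)]$; all coordinates are nonzero since $\mathbb{X}\cap\Delta_{n-1}=\emptyset$. Parametrize $\ell$ by $\theta=[\theta_0:\theta_1]\in\PP^1$, so that $P(\theta)=[\ell_0(\theta):\cdots:\ell_n(\theta)]$ with $\ell_j(\theta)=u_j\theta_0+v_j\theta_1$ a nonzero linear form vanishing at a single point $\tau_j\in\PP^1$. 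The first routine step is the equivalence
$$\ell\cap\Delta_{n-2}=\emptyset\quad\Longleftrightarrow\quad \tau_0,\dots,\tau_n\ \text{pairwise distinct},$$
since a point $P(\theta)\in H_i\cap H_j\subseteq\Delta_{n-2}$ is exactly a common zero of $\ell_i$ and $\ell_j$; equivalently all $2\times2$ minors $u_iv_j-u_jv_i$ are nonzero.

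The key geometric input is that every $(n-1)$-fold Hadamard product of points of $\ell$ lies on one fixed hyperplane $H'$. Indeed, expanding $\prod_{k=1}^{n-1}\ell_j(\theta^{(k)})$ by elementary symmetric functions of the parameters $\theta^{(1)},\dots,\theta^{(n-1)}$ writes such a product as a linear combination $\sum_{l=0}^{n-1}E_l\,W_l$, where the coefficients $E_l$ do not depend on $j$ and $W_l=[u_0^{n-1-l}v_0^l:\cdots:u_n^{n-1-l}v_n^l]$. A Vandermonde-type computation, using that the pairs $(u_j,v_j)$ are pairwise non-proportional, shows $W_0,\dots,W_{n-1}$ are linearly independent, so $H':=\langle W_0,\dots,W_{n-1}\rangle$ is a genuine hyperplane; the same nonvanishing forces every coefficient of its defining form to be nonzero, i.e. $H'\cap\Delta_0=\emptyset$. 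For $\mathcal{I}=\{i_1,\dots,i_n\}$ isolate one index $i\in\mathcal{I}$ and write $R_{\mathcal{I}}:=P_{i_1}\hada\cdots\hada P_{i_n}=P_i\hada S$ with $S$ an $(n-1)$-fold product of the remaining points; then $S\in H'$, so $R_{\mathcal{I}}\in P_i\hada H'$. Setting $H_i:=P_i\hada H'$, which is a hyperplane by Remark \ref{rem2} as $P_i\notin\Delta_{n-1}$, gives $R_{\mathcal{I}}\in\bigcap_{i\in\mathcal{I}}H_i$.

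The second input is the general position of $\sigma(P_1),\dots,\sigma(P_m)$. Clearing denominators, $\sigma$ sends $P(\theta)$ to $[g_0(\theta):\cdots:g_n(\theta)]$ with $g_j=\prod_{k\neq j}\ell_k$ of degree $n$. Evaluating at the distinct points $\tau_k$ yields $g_j(\tau_k)=0$ for $j\neq k$ and $g_j(\tau_j)\neq0$, so the $g_j$ are linearly independent; hence $\sigma(\ell)$ is a rational normal curve of degree $n$ in $\PP^n$. Since $\sigma$ is injective away from $\Delta_{n-1}$, the $\sigma(P_i)$ are distinct points of this curve, and any $n+1$ distinct points of a degree-$n$ rational normal curve span $\PP^n$; therefore no $n+1$ of them lie on a hyperplane, i.e. $\sigma(P_1),\dots,\sigma(P_m)$ are in general position.

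Finally I would assemble the three facts $H'\cap\Delta_0=\emptyset$, $P_i\in\PP^n\setminus\Delta_{n-1}$, and $\sigma(P_1),\dots,\sigma(P_m)$ in general position, and apply Theorem \ref{teowhsc} to conclude that $\mathbb{X}_n(H_1,\dots,H_m)$ is a WHSC. In particular $\{H_1,\dots,H_m\}$ is generally linear, so each $\bigcap_{i\in\mathcal{I}}H_i$ is a single point; combined with $R_{\mathcal{I}}\in\bigcap_{i\in\mathcal{I}}H_i$ this forces $\bigcap_{i\in\mathcal{I}}H_i=\{R_{\mathcal{I}}\}$, whence $\mathbb{X}^{\underline{\hada}n}=\mathbb{X}_n(H_1,\dots,H_m)$ is a WHSC. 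I expect the main obstacle to be the two independence statements — that the $W_l$ span a hyperplane whose equation is fully supported, and that $\sigma(\ell)$ is a rational normal curve — both of which reduce to converting the hypothesis $\ell\cap\Delta_{n-2}=\emptyset$ into distinctness of $\tau_0,\dots,\tau_n$ and then invoking Vandermonde nonvanishing.
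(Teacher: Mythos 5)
Your argument is correct, but it takes a genuinely different and much more self-contained route than the paper. The paper's proof is essentially two lines: it quotes \cite[Theorem 4.7]{BCK}, which already asserts that $\mathbb{X}^{\underline{\hada}n}$ is a star configuration defined by the hyperplanes $\{P\hada\ell^{\hada(n-1)}\mid P\in\mathbb{X}\}$, and then simply observes that this family is by construction a Hadamard set (each hyperplane is $P_i\hada V(L)$ for the single form $L$ cutting out $\ell^{\hada(n-1)}$), so the configuration is a WHSC directly from the definition --- no appeal to Theorem \ref{teowhsc} is needed. You instead re-derive the content of that citation from scratch: the elementary-symmetric-function expansion identifying $\ell^{\hada(n-1)}$ with the span of the $W_l$, the homogeneous Vandermonde computation (driven by $\ell\cap\Delta_{n-2}=\emptyset$ forcing the minors $u_iv_j-u_jv_i$ to be nonzero) showing $H'$ is a hyperplane with $H'\cap\Delta_0=\emptyset$, and the observation that $\sigma(\ell)$ is a rational normal curve of degree $n$, which gives general position of $\sigma(P_1),\dots,\sigma(P_m)$ and hence lets you invoke Theorem \ref{teowhsc} to obtain general linearity rather than importing it from \cite{BCK}. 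What your approach buys is independence from the black box and a transparent geometric explanation of why the hypothesis on $\ell$ yields a star configuration; the cost is length. One small point to make explicit in the final identification $\mathbb{X}^{\underline{\hada}n}=\mathbb{X}_n(H_1,\dots,H_m)$: distinct $n$-subsets $\mathcal{I}$ give distinct points $R_{\mathcal{I}}$, since a point common to $\bigcap_{i\in\mathcal{I}}H_i$ and $\bigcap_{i\in\mathcal{I}'}H_i$ with $\mathcal{I}\neq\mathcal{I}'$ would lie on more than $n$ of the $H_i$, contradicting general linearity; this ensures the two sets coincide with cardinality $\binom{m}{n}$.
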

\begin{proof}
From \cite[Theorem 4.7]{BCK}, we have that $\mathbb{X}^{\underline{\hada}n}$ is a star configuration defined by the set of hyperplanes $\{P\hada\ell^{\hada(n-1)}|P\in\mathbb{X}\}$. The proof is completed by the definition of WHSC.
\end{proof}
In the following, we extend \cite[Theorem 2.17]{CCGV}.
\begin{theorem}\label{thm411}
Let $\ell$ be a line in $\PP^n$ such that $\ell\cap\Delta_{n-2}=\emptyset$, and let $\mathbb{X}\subseteq\ell$ be a set of $m>n$ points such that  $\mathbb{X}\cap\Delta_{n-1}=\emptyset.$
If there exist two distinct points $P=[p_0:\cdots:p_n]$ and $Q=[q_0:\cdots:q_n]$ on $\ell$ such that
\begin{equation}\label{twodet}
\det \left( \begin{bmatrix}
p_0^{n-1}& \cdots&p_n^{n-1}\\
p_0^{n-2}q_0&\cdots &p_n^{n-2}q_n\\
\vdots&\space& \vdots\\
q_0^{n-1}& \cdots&q_n^{n-1}\\
p_0& \cdots&p_n\\
\end{bmatrix}\right) =\det\left( \begin{bmatrix}
p_0^{n-1}& \cdots&p_n^{n-1}\\
p_0^{n-2}q_0&\cdots &p_n^{n-2}q_n\\
\vdots&\space& \vdots\\
q_0^{n-1}& \cdots&q_n^{n-1}\\
q_0& \cdots&q_n\\
\end{bmatrix}\right) =0,
\end{equation}
then $\mathbb{X}^{\underline{\hada}n}$ is a HSC.
\end{theorem}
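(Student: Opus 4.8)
The plan is to leverage Theorem~\ref{teoxweak}, which already tells us that $\mathbb{X}^{\underline{\hada}n}$ is a star configuration cut out by the hyperplanes $\{P\hada\ell^{\hada(n-1)}\mid P\in\mathbb{X}\}$, so that it is automatically a WHSC. The only thing left to establish is that the underlying Hadamard set can be realized as a \emph{strong} one, i.e.\ that there is a common linear form $L$ with $V(L_P)=P\hada V(L)$ and $P\in V(L)$ for every $P\in\mathbb{X}$. The natural candidate for $V(L)$ is $\ell^{\hada(n-1)}$ itself, and the whole proof reduces to checking that every point of $\mathbb{X}$ lies on this hyperplane.

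First I would make the hyperplane $\ell^{\hada(n-1)}$ explicit. Writing a general point of $\ell$ as $[\lambda p_i+\mu q_i]_{i}$ (using the two distinct points $P,Q\in\ell$ as a basis of the line), the $i$-th coordinate of a Hadamard product of $n-1$ points of $\ell$ is $\prod_k(\lambda_k p_i+\mu_k q_i)=\sum_{j=0}^{n-1}e_j\,p_i^{\,n-1-j}q_i^{\,j}$, where the $e_j$ depend only on the parameters. As those parameters vary, the coefficient vector $(e_0,\dots,e_{n-1})$ sweeps out all of $\PP^{n-1}$, so $\ell^{\hada(n-1)}$ is exactly the linear span of the $n$ points $v_j=[p_i^{\,n-1-j}q_i^{\,j}]_{i}$ for $j=0,\dots,n-1$; under the standing hypotheses $\ell\cap\Delta_{n-2}=\emptyset$ and $\mathbb{X}\cap\Delta_{n-1}=\emptyset$ these points are independent, so $\ell^{\hada(n-1)}=V(G)$ is a genuine hyperplane.

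The $n$ rows $v_0,\dots,v_{n-1}$ are precisely the top $n$ rows of each matrix in \eqref{twodet}, while the respective last rows are $P$ and $Q$. Hence the first determinant vanishes if and only if $P\in\operatorname{span}(v_0,\dots,v_{n-1})=\ell^{\hada(n-1)}$, and the second vanishes if and only if $Q\in\ell^{\hada(n-1)}$. Thus condition \eqref{twodet} says exactly that both $P$ and $Q$ lie on the hyperplane $V(G)$. Since $V(G)$ is a hyperplane containing the two distinct points $P,Q$ that span $\ell$, the whole line is contained in it, and therefore $\mathbb{X}\subseteq\ell\subseteq V(G)=\ell^{\hada(n-1)}$.

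Finally I would conclude: taking $L=G$ and the points of $\mathbb{X}$ as the Hadamard multipliers, the representation $V(L_P)=P\hada\ell^{\hada(n-1)}=P\hada V(L)$ coming from Theorem~\ref{teoxweak} now has all multipliers $P$ lying on $V(L)$, so the defining Hadamard set is strong and $\mathbb{X}^{\underline{\hada}n}$ is a HSC. The one genuinely delicate point is the identification step of the second and third paragraphs, namely recognizing the top rows of the matrices in \eqref{twodet} as the spanning points of $\ell^{\hada(n-1)}$ together with $P$ (resp.\ $Q$); once this is in place, the conclusion is forced by the elementary fact that a hyperplane through two points contains the line they span.
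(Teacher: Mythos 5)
Your proposal is correct and follows essentially the same route as the paper: identify $\ell^{\hada(n-1)}$ as the hyperplane spanned by the rows $[p_i^{n-1-j}q_i^{j}]_i$, read condition \eqref{twodet} as saying $P,Q\in\ell^{\hada(n-1)}$, deduce $\mathbb{X}\subseteq\ell\subseteq\ell^{\hada(n-1)}$, and upgrade the WHSC from Theorem~\ref{teoxweak} to an HSC. The only difference is cosmetic: the paper simply cites \cite[Corollary 3.7]{BCK} for the determinantal equation of $\ell^{\hada(n-1)}$, whereas you rederive it by parametrizing the line through $P$ and $Q$.
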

\begin{proof}
By \cite[Corollary 3.7]{BCK}, $\ell^{\hada(n-1)}$ is given by the following equation:
$$\det\left(  \begin{bmatrix}
p_0^{n-1}& p_1^{n-1}& \cdots&p_n^{n-1}\\
p_0^{n-2}q_0&p_1^{n-2}q_1&\cdots &p_n^{n-2}q_n\\
\vdots&\vdots&\space& \vdots\\
q_0^{n-1}& q_1^{n-1}& \cdots&q_n^{n-1}\\
x_0& x_1& \cdots&x_n\\
\end{bmatrix}\right) =0.$$
By the hypothesis on $P$ and $Q$, we deduce that $P$ and $Q$ are in $\ell^{\hada(n-1)}$, hence that $\mathbb{X}\subseteq\ell\subseteq\ell^{\hada(n-1)}$. From Theorem \ref{teoxweak}, $\mathbb{X}^{\underline{\hada}n}$ is a WHSC and is given by $\{P\hada\ell^{\hada(n-1)}|P\in\mathbb{X}\}$, and thus $\mathbb{X}^{\underline{\hada}n}$ is a HSC by the definition Hadamard star configuration.
\end{proof}

\begin{remark}(\ref{twodet}) is a numerical sufficient condition whether  $\mathbb{X}^{\underline{\hada}n}$ is a HSC. More geometrically, (\ref{twodet}) follows that $P$ and $Q$ are in the linear subspace generated by $P^{{\hada}(n-1)},P^{{\hada}(n-2)}\hada Q,\ldots,P\hada Q^{{\hada}(n-2)},Q^{{\hada}(n-1)}$. Moreover, one can check that if $[1:\cdots:1]\in\ell$, then (\ref{twodet}) holds; so also Theorem \ref{thm411} for all $Q\in \ell$. Furthermore, Theorem \ref{thm411} always holds for $n=2$ (see \cite[Theorem 2.17]{CCGV}).
\end{remark}
\section{Apolar Hadamard star configuration}\label{AHS}
In this section, we study the existence of a (weak) Hadamard star configuration apolar to homogeneous polynomials. In \cite{BC} the authors described the 3-ples $ (d,r,n) $  for which there exists a star configuration $ \mathbb{X}(r):=\mathbb{X}(L_1,\ldots,L_r)$ of codimension $ n $ apolar to the generic $ F\in S_d $. We recall some basic facts.

Let us consider the standard graded polynomial ring $T=\mathbb{C}[y_0,\ldots,y_n]$. We make $ T $ act on $S$ via differentiation, e.g., we think of $ y_j =\partial/\partial x_j $. For any form $ F $ of degree $ d $ in $ S $, we define the ideal $ F^\perp\subseteq T $ say \emph{perp ideal} of $ F $ as follows:
$$F^\perp=\{\partial\in T:\partial F=0\}. $$
\begin{lemma}[Apolarity Lemma]
	A homogeneous degree d form $F\in S$ can be written as
	$$F=\displaystyle\sum_{i=1}^s \alpha_i{L}^d_i,\ {L}_i\in S_1\  pairwise\ linearly\ independent,\ \alpha_i\in\mathbb{C} $$
	if and only if there exists $ I\subseteq F^\perp $ such that $ I $ is the ideal of a set of $ s $ distinct points in $ \mathbb{P}(S_1) $.
\end{lemma}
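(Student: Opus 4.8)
The plan is to exploit the differential action of $T$ on $S$ together with a single explicit computation that links powers of a linear form to the point it determines in $\mathbb{P}(S_1)$. Concretely, for a linear form $L = c_0 x_0 + \cdots + c_n x_n$ I would write $P_L = [c_0 : \cdots : c_n] \in \mathbb{P}(S_1)$ and let $I_{P_L} \subseteq T$ be its (prime) ideal. The key identity I would establish first is that for a homogeneous operator $\partial \in T_e$ with $e \le d$,
$$\partial(L^d) = \frac{d!}{(d-e)!}\, \partial(c_0,\ldots,c_n)\, L^{d-e},$$
proved by induction on $e$ starting from $y_j(L^d) = d\, c_j\, L^{d-1}$. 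Two consequences are immediate: taking $e = d$ gives $\partial(L^d) = d!\,\partial(c_0,\ldots,c_n)$, a scalar that vanishes exactly when $\partial$ vanishes at $P_L$; and for all $e \le d$ one gets the containment $I_{P_L} \subseteq (L^d)^\perp$. These are the only facts about the action I expect to need.

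For the forward implication, suppose $F = \sum_{i=1}^s \alpha_i L_i^d$ with the $L_i$ pairwise linearly independent. Then the points $P_i := P_{L_i}$ are $s$ distinct points of $\mathbb{P}(S_1)$; let $I = I_{\mathbb{X}}$ with $\mathbb{X} = \{P_1,\ldots,P_s\}$ be their saturated ideal in $T$. For any $\partial \in I = \bigcap_i I_{P_i}$ and each $i$ we have $\partial \in I_{P_i} \subseteq (L_i^d)^\perp$, so $\partial(F) = \sum_i \alpha_i\, \partial(L_i^d) = 0$; hence $I \subseteq F^\perp$, and $I$ is the ideal of $s$ distinct points, as required.

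For the converse I would use non-degeneracy of the apolarity pairing $T_d \times S_d \to \mathbb{C}$, $(\partial, G) \mapsto \partial(G)$. Given $I = I_{\mathbb{X}} \subseteq F^\perp$ with $\mathbb{X} = \{P_1,\ldots,P_s\}$ distinct, pick $L_i \in S_1$ with $P_i = P_{L_i}$ and set $W = \langle L_1^d,\ldots,L_s^d\rangle \subseteq S_d$. The degree-$d$ case of the computation above identifies the annihilator of $W$ in $T_d$ as $W^{\perp} = \{\partial \in T_d : \partial(P_i) = 0 \ \forall i\} = (I_{\mathbb{X}})_d$. Since $(I_{\mathbb{X}})_d \subseteq (F^\perp)_d$, every $\partial \in W^\perp$ kills $F$; non-degeneracy of the pairing then gives $F \in (W^\perp)^\perp = W$, where the double annihilator equals $W$ by the dimension count $\dim(W^\perp)^\perp = \dim S_d - \dim W^\perp = \dim W$. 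Thus $F = \sum_i \alpha_i L_i^d$, and the $L_i$ are pairwise linearly independent because the $P_i$ are distinct.

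The main obstacle is not the two set-theoretic implications but getting the foundational inputs exactly right: the explicit differentiation formula (and hence the containment $I_{P_L} \subseteq (L^d)^\perp$) and the non-degeneracy of the apolarity pairing in degree $d$, which underlies the double-perp step $(W^\perp)^\perp = W$. Care is also needed with the dictionary between a linear form $L \in S_1$, the point $P_L \in \mathbb{P}(S_1)$, and the evaluation $\partial(c_0,\ldots,c_n)$ of an operator at the coordinates of that point, since it is precisely this dictionary that converts \emph{belonging to the perp ideal} into \emph{vanishing at a point}.
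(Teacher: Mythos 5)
The paper states this lemma without proof, recalling it as a classical fact, so there is no in-paper argument to compare against. Your proposal is the standard textbook proof --- the differentiation formula $\partial(L^d)=\tfrac{d!}{(d-e)!}\,\partial(c_0,\ldots,c_n)\,L^{d-e}$ giving $I_{P_L}\subseteq (L^d)^\perp$ for the forward direction, and non-degeneracy of the pairing $T_d\times S_d\to\mathbb{C}$ together with the double-annihilator identity $(W^\perp)^\perp=W$ for the converse --- and it is correct and complete.
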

\begin{definition}
We say that a set of points $ \mathbb{X} $ is {\em apolar to a form $ F $} if the ideal of the set of points is such that  $I({\mathbb{X}})\subset F^\perp $. We say that $ \mathbb{X} $ is an {\em apolar Hadamard star configuration (aHSC) for} $F$ if the set $\mathbb{X} $ is a HSC.
\end{definition}
\begin{remark}\label{rem5.2}
Any linear form $ L\in S_1 $ can be seen as the point $[L]\in\PP(S_1)$, so the set $\LL=\{L_1,\ldots,L_r\}\subset S_1$ of generally linear forms is a set of points in general position in $\PP(S_1)$. 
\end{remark}
\begin{remark}\label{notexist}
Let $F$ be a generic form of degree $d\geq 2$ in $n+1$ variables. If $r<d+n$, then there is Neither WHSC nor HSC apolar to $F$ unless,
$$(d,r,n)=(3,5,3),(4,6,3),(5,7,3),(3,6,4),(3,7,5),\ \text{and}\ (d,d+1,2),$$
 (see \cite[Lemma 3.2, Theorem 3.3, 3.4,  Proposition 3.5 and Conjecture 3.6]{BC}).
\end{remark}
\begin{lemma}\label{proapolar}
Assume that Corollary \ref{cor3} holds. Let $ F $ be a form of degree $ d\geq 2  $ in $n+1$ variables. If $ r\geq d+n $, then there exists an HSC  $\mathbb{X}(H_1,\ldots,H_r) $ apolar to $ F $.
\end{lemma}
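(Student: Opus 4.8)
The plan is to split the statement into two independent tasks: first, to produce some honest HSC $\mathbb{X}(H_1,\ldots,H_r)$ of codimension $n$; and second, to verify the apolarity condition $I(\mathbb{X})\subseteq F^\perp$. The point I want to exploit is that, under the hypothesis $r\ge d+n$, the second task becomes automatic and imposes no constraint on $F$, so the only genuine content is the construction of the configuration, which Section~4 already supplies.

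For the existence I would fix a generic hyperplane $H=V(a_0x_0+\cdots+a_nx_n)$ with all $a_i\ne 0$, so that $H\cap\Delta_0=\emptyset$, and choose generic points $P_1,\ldots,P_r\in H$; genericity ensures $P_i\in H\setminus\Delta_{n-1}$. By Lemma~\ref{genericlemmahyp} the points $\sigma(P_1),\ldots,\sigma(P_r)$ are in general position, so setting $H_i=P_i\hada H$, Corollary~\ref{cor2} guarantees that $\mathbb{X}(H_1,\ldots,H_r)$ is a HSC of codimension $n$.

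The heart of the argument, and the step worth isolating, is the degree bookkeeping. The ideal of a codimension-$n$ star configuration on the generally linear forms $L_1,\ldots,L_r$ is $\bigcap_{1\le i_1<\cdots<i_n\le r}(L_{i_1},\ldots,L_{i_n})$, and it is minimally generated by the products of $r-n+1$ distinct forms among the $L_i$; in particular $I(\mathbb{X})_j=0$ for every $j<r-n+1$. On the other side, since $T$ acts on $S$ by differentiation, any operator of degree $j>d$ annihilates the degree-$d$ form $F$, so $(F^\perp)_j=T_j$ for all $j>d$. The hypothesis $r\ge d+n$ yields $r-n+1\ge d+1>d$, and combining the two facts degree by degree gives $I(\mathbb{X})_j=0\subseteq(F^\perp)_j$ for $j<r-n+1$, and $I(\mathbb{X})_j\subseteq T_j=(F^\perp)_j$ for $j\ge r-n+1$. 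Hence $I(\mathbb{X})\subseteq F^\perp$.

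Putting the two pieces together, the HSC built above is apolar to $F$, so it is an aHSC for $F$, as required. I expect no serious obstacle: the only fact borrowed from outside the excerpt is the classical description of the generators of a star-configuration ideal, which pins its initial degree at exactly $r-n+1$, and everything else is either the degree count or a direct appeal to Corollary~\ref{cor2} and Lemma~\ref{genericlemmahyp}. It is worth emphasizing that the argument uses no genericity of $F$ whatsoever, which is precisely what makes the threshold $r\ge d+n$ natural in view of Remark~\ref{notexist}.
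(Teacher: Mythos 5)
Your proof is correct and follows essentially the same route as the paper: the paper constructs the configuration via Corollary~\ref{cor3} (which is exactly your generic hyperplane plus Lemma~\ref{genericlemmahyp} and Corollary~\ref{cor2} step) and then invokes \cite[Lemma 3.1]{BC} for apolarity. Your degree bookkeeping --- initial degree $r-n+1>d$ of the star-configuration ideal versus $(F^\perp)_j=T_j$ for $j>d$ --- is precisely the content of that cited lemma, so you have simply unpacked the paper's black box.
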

\begin{proof}
The desired result follows from \cite[Lemma 3.1]{BC}.   
\end{proof}
\begin{example}\label{ex5.6}
Let $ F=\frac{1}{5}x_0^{2}+x_0 x_1+3 x_1^{2}+\frac{7}{9} x_0 z_2+\frac{5}{4} x_1 x_2+\frac{5}{4} x_2^{2} $ be a generic ternary quadratic form and $ \LL=\{L_1,L_1,L_3,L_4\} $ be a set of generally linear forms, where $L_1=(13/4)y_0+(1/2)y_1+(1/3)y_2$, $ L_2= -(13/15)y_0+(1/3)y_1+(1/6)y_2$, $ L_3= (1/7)y_0+(1/7)y_1+(1/5)y_2$ and $ L_4= y_0+(1/3)y_1+(1/4)y_2$. By Proposition \ref{propsamesupport},  $ \mathbb{X}(\LL) $ is a WHSC, and thus Lemma \ref{proapolar} shows that $ \mathbb{X}(\LL) $ is apolar to $ F $. 
Using \cite[Theorem 3.1]{CCGV}, we conclude that $ \mathbb{X}(\LL) $ is aHSC too since
$$\operatorname{rk}
\left( \begin{bmatrix}
4/13& 2&3\\
{-15/13}&3&6\\
7&7&5\\
1&3&4
\end{bmatrix}\right) =2.
$$
\end{example}
\begin{remark}
Note that, four linear forms $\Gamma_1=y_0+3y_1-2y_2$, $\Gamma_2=-3y_0+5y_1+y_2$, $\Gamma_3=-(1/2)y_0+(1/4)y_1+7y_2$ and $\Gamma_4=4y_0+3y_1+y_2$ construct a WHSC $ \mathbb{X}(\Gamma_1,\ldots,\Gamma_4) $ apolar to $ F $, but do not construct an aHSC since
$$\operatorname{rk}\left( \begin{bmatrix}
1&1/3&{-1/2}\\
{-1/3}&1/5&1\\
{-2}&4&1/7\\
1/4&1/3&1
\end{bmatrix}\right) \neq 2.$$
\end{remark}
\begin{example}\label{nicex}
Let $ M=x_0x_1x_2 $ be a ternary monomial. Since $ M $ has rank 4, the interesting case to check is $r=4$. For $r\neq4$, see Remark \ref{notexist} and Lemma \ref{proapolar}. Let $\mathbb{X}(4)$ be a generic star configuration constructed by linear forms,
\begin{flalign*}
L_1 = a_1y_0 + b_1y_1 + c_1y_2,\  L_2 = a_2y_0 + b_2y_1 + c_2y_2,\\
L_3 = a_3y_0 + b_3y_1 + c_3y_2,\  L_4 = a_4y_0 + b_4y_1 + c_4y_2,
\end{flalign*}
with all $a_i, b_i, c_i$ different from zero. There is no loss of generality in assuming $c_1=c_2=c_3=c_4=1$. An easy calculation follows that $M^\perp=(y_0^2,y_1^2,y_2^2).$
By Apolarity Lemma, the set $\mathbb{X}({L_1,\ldots,L_4})$ is apolar to $M$ if and only if  $I({\mathbb{X}({L_1,\ldots,L_4})})\subset M^\perp$, and it follows that 
\begin{equation}\label{form1}
\begin{array}{c}
b_3a_4 + b_2a_4 + a_3b_4 + a_2b_4 + b_2a_3 + a_2b_3 = 0,\\
b_3a_4 + b_1a_4 + a_3b_4 + a_1b_4 + b_1a_3 + a_1b_3 = 0,\\
b_2a_4 + b_1a_4 + a_2b_4 + a_1b_4 + b_1a_2 + a_1b_2 = 0,\\
b_2a_3 + b_1a_3 + a_2b_3 + a_1b_3 + b_1a_2 + a_1b_2 = 0.
\end{array}
\end{equation}
Assume that (\ref{form1}) has at least one solution with all $ a_i\neq 0 $ and $ b_i\neq0 $ such that $ L_1,\ldots,L_4 $ are generally linear. Thus all maximal minors of the coefficients matrix of $ L_1,\ldots,L_4 $ are non-zeros, i.e.,
\begin{equation}\label{formnew}
\begin{array}{c}
-a_2b_1+a_3b_1+a_1b_2-a_3b_2-a_1b_3+a_2b_3 \not = 0,\\
-a_2b_1+a_4b_1+a_1b_2-a_4b_2-a_1b_4+a_2b_4 \not =  0,\\
-a_3b_1+a_4b_1+a_1b_3-a_4b_3-a_1b_4+a_3b_4 \not =  0,\\
-a_3b_2+a_4b_2+a_2b_3-a_4b_3-a_2b_4+a_3b_4 \not =  0.
\end{array}
\end{equation}
If (\ref{form1}) and (\ref{formnew}) hold, then the set $\mathbb{X}({L_1,\ldots,L_4})$ is a WHSC which is apolar to $ M $. Furthermore, by Corollary \ref{cor2}, it is an aHSC if and only if 
$$\operatorname{rk}\left( \begin{bmatrix}
\frac{1}{a_1} & \frac{1}{b_1} & 1\\
\frac{1}{a_2} & \frac{1}{b_2} & 1\\
\frac{1}{a_3} & \frac{1}{b_3} & 1\\
\frac{1}{a_4} & \frac{1}{b_4} & 1
\end{bmatrix}\right) =2.
$$
Therefore, all maximal minors of the matrix above should be zero,
\begin{equation}\label{form2}
\begin{array}{c}
\frac{1}{a_1b_2}+\frac{1}{a_3b_1}+\frac{1}{a_2b_3}-\frac{1}{a_3b_2}-\frac{1}{a_2b_1}-\frac{1}{a_1b_3}=0,\\
\frac{1}{a_1b_2}+\frac{1}{a_4b_1}+\frac{1}{a_2b_4}-\frac{1}{a_4b_2}-\frac{1}{a_2b_1}-\frac{1}{a_1b_4}=0,\\
\frac{1}{a_1b_3}+\frac{1}{a_4b_1}+\frac{1}{a_3b_4}-\frac{1}{a_4b_3}-\frac{1}{a_3b_1}-\frac{1}{a_1b_4}=0,\\
\frac{1}{a_2b_3}+\frac{1}{a_4b_2}+\frac{1}{a_3b_4}-\frac{1}{a_4b_3}-\frac{1}{a_3b_2}-\frac{1}{a_2b_4}=0.
\end{array}
\end{equation}
Since all $a_i$ and $b_i$ are non-zero, (\ref{form2}) is equivalent: 
\begin{equation}\label{form3}
\begin{array}{c}
a_1a_2a_3b_1b_2b_3\left(\frac{1}{a_1b_2}+\frac{1}{a_3b_1}+\frac{1}{a_2b_3}-\frac{1}{a_3b_2}-\frac{1}{a_2b_1}-\frac{1}{a_1b_3}\right)=0\\
a_1a_2a_4b_1b_2b_4\left(\frac{1}{a_1b_2}+\frac{1}{a_4b_1}+\frac{1}{a_2b_4}-\frac{1}{a_4b_2}-\frac{1}{a_2b_1}-\frac{1}{a_1b_4}\right)=0\\
a_1a_3a_4b_1b_3b_4\left(\frac{1}{a_1b_3}+\frac{1}{a_4b_1}+\frac{1}{a_3b_4}-\frac{1}{a_4b_3}-\frac{1}{a_3b_1}-\frac{1}{a_1b_4}\right)=0\\
a_2a_3a_4b_2b_3b_4\left(\frac{1}{a_2b_3}+\frac{1}{a_4b_2}+\frac{1}{a_3b_4}-\frac{1}{a_4b_3}-\frac{1}{a_3b_2}-\frac{1}{a_2b_4}\right)=0.\\
\end{array}
\end{equation}
Let $ I, J $, and $ K $ be the ideals generated by the equations of (\ref{form1}), (\ref{formnew}), and (\ref{form3}), respectively. We conclude that any point in the zero locus of $ I'=(I:J^\infty)\subset\mathbb{C}[a_1,\dots,a_4,b_1,\dots,b_4] $ is equivalent to a WHSC apolar to $M$, unless some $a_i$ and $b_i$ are zeros. Moreover, any point $[a_1:\cdots:b_4]\in V(I'+K)  $ with all $a_i\neq0$ and $b_i\neq0$ constructs an aHSC for $ M $. A simple computation in \verb|Macaulay2|\cite{GS} shows that $ K\subset I' $ and so $ I'+K=I' $, thus we conclude that a WHSC apolar to $M$ is an aHSC too. Using \verb|Macaulay2| and \verb|Bertini_real|\cite{B_R} helped us to find a point on $  V(I') $, that is,
$$
a_1=\frac{\sqrt{2641}+119}{4(\sqrt{2641}+47)},\ a_2=\frac{2(-\sqrt{2641}-59)}{\sqrt{2641}+47},\ a_3=2,\ a_4=1,
$$
$$b_1=\frac{3(-\sqrt{2641}-39)}{16},\ 
b_2=5, \ b_3=9, \ b_4=\frac{\sqrt{2641}+11}{4}.
$$
Therefore, it verifies the existence of an aHSC $ \mathbb{X}(4) $ for $ M $.
\end{example}
\begin{remark}
Note that from Example \ref{nicex} it follows that if we have a star configuration $ \mathbb{X}(L_1,\dots,L_4) $ apolar to $ M $ such that $ V(L_i)\cap \Delta_0=\emptyset $, then there exists a line $ L $ and four points $ P_1,\dots,P_4\in V(L) $ such that $ V(L_i)=V(L)\hada P_i $ for all $ i $.
\end{remark}
\begin{remark}\label{remhold}
A generic form $ F $ has an apolar WHSC if and only if there exists a star configuration $ \mathbb{X}(L_1,\ldots,L_r) $ apolar to $ F $ such that $V(L_i)\cap\Delta_0=\emptyset$ for all $i\in[r]$.
\end{remark}
\begin{remark}
The authors in \cite[Conjecture 3.6]{BC} suggest that any generic ternary form of degree $ d\geq 3$ has an apolar star configuration $ \mathbb{X}(d+1)$. One can see that the conjecture is also satisfied for the existence of an apolar WHSC $ \mathbb{X}(d+1)$ if Remark \ref{remhold} holds.
\end{remark}

\begin{corollary}
There exists an apolar WHSC $ \mathbb{X}(4)$ for any ternary cubic of rank five.
\end{corollary}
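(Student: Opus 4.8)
The plan is to translate ``apolar WHSC $\mathbb{X}(4)$'' into a concrete condition on four points, and then solve it using the special apolar geometry of rank-five cubics. Here $n=2$, $d=3$, and a star configuration $\mathbb{X}(4)=\mathbb{X}_2(L_1,\dots,L_4)$ is the set of $\binom{4}{2}=6$ points $V(L_i,L_j)\subset\PP^2$; its ideal is generated by the four cubics $L_iL_jL_k$ (products of triples), provided the $L_i$ are generally linear so that no conic passes through the six points. By Proposition \ref{propsamesupport}, such a configuration is a WHSC exactly when every $L_i$ has all coefficients nonzero. Writing $A_i=[a^{(i)}_0:a^{(i)}_1:a^{(i)}_2]$ for the coefficient vector of $L_i$ and using the apolarity pairing, the condition $I(\mathbb{X}(4))\subseteq F^\perp$ reduces to $\langle L_iL_jL_k,F\rangle=0$ for the four triples, i.e. $\widetilde F(A_i,A_j,A_k)=0$, where $\widetilde F$ is the symmetric trilinear full polarization of $F$. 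Thus I must produce four points $A_1,\dots,A_4\in\PP^2$, in general position, with all coordinates nonzero, on whose every triple the polarization of $F$ vanishes; I call such a quadruple a polar tetrad.

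First I would record the structure of a rank-five cubic: its apolar ideal has Hilbert function $(1,3,3,1)$, and, unlike the generic (rank-four) case where $F^\perp$ is a complete intersection of three conics, the net $(F^\perp)_2$ now has a length-three base scheme supported at a single point, so $F^\perp$ acquires cubic generators. Equivalently, up to a projective change of coordinates every rank-five ternary cubic is a smooth conic plus a tangent line, $F_0=x_0x_2^2-x_1^2x_2=x_2(x_0x_2-x_1^2)$. For a product $F=T\cdot Q$ of a linear form $T$ and a quadratic $Q$ the polar-tetrad equations take the transparent form
\[
T(A_i)\,\widetilde Q(A_j,A_k)+T(A_j)\,\widetilde Q(A_i,A_k)+T(A_k)\,\widetilde Q(A_i,A_j)=0
\]
for every triple, where $\widetilde Q$ is the polar bilinear form of the conic.

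Next I would establish existence of polar tetrads with enough freedom. A parameter count gives eight free parameters (four points of $\PP^2$) against the four triple equations, so the variety $\mathcal W_F$ of polar tetrads should be four-dimensional. I would make this effective by a sequential construction: choose $A_1,A_2$ freely, force $A_3$ onto the polar line $\{\widetilde F(A_1,A_2,\cdot)=0\}$ subject to the single extra condition that the three polar lines $\widetilde F(A_1,A_2,\cdot)$, $\widetilde F(A_1,A_3,\cdot)$, $\widetilde F(A_2,A_3,\cdot)$ be concurrent, and take $A_4$ to be that common point. Because the defining equations are symmetric under $S_4$, showing that the first projection $\mathcal W_{F_0}\to\PP^2$, $\tau\mapsto A_1$, is dominant forces every projection $\tau\mapsto A_i$ to be dominant. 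Carrying out the one genuine computation on the normal form $F_0$ (in the spirit of Example \ref{nicex}, e.g.\ with \verb|Macaulay2|) I would exhibit an explicit polar tetrad of $F_0$ in general position with all coordinates nonzero, simultaneously confirming non-degeneracy of the construction.

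Finally I would pass from $F_0$ to an arbitrary rank-five cubic $F=gF_0$, $g\in GL_3$. The apolarity and general-position conditions are $GL_3$-equivariant, so on a suitable irreducible component the fiber satisfies $\mathcal W_F=g\cdot\mathcal W_{F_0}$; the only non-invariant requirement is that the coordinates be nonzero, i.e.\ that the four points avoid the coordinate triangle $g^{-1}(\Delta_1)$. Since each projection $\mathcal W_{F_0}\to\PP^2$ is dominant, $\mathcal W_{F_0}$ is contained in the preimage of no line, hence avoids any given triangle on a dense open set, so a generic polar tetrad of $F_0$ maps under $g$ to one with all coordinates nonzero and in general position; the associated $L_1,\dots,L_4$ then define a WHSC apolar to $F$. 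The main obstacle is exactly this interplay between the degeneracy of the polarity of the singular, reducible cubic $F_0$, which could in principle collapse the construction (empty fiber, non-dominant projections, or tetrads forced onto coordinate lines), and the fact that the WHSC condition is \emph{not} projectively invariant. The heart of the proof is therefore the explicit non-degenerate polar tetrad for $F_0$ together with the dominance of its projections, which together make the equivariant transfer to the whole orbit legitimate.
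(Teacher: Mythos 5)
Your route is genuinely different from the paper's: the paper disposes of this corollary in one line by citing \cite[Proposition 4.4]{BC}, which already supplies an apolar star configuration $\mathbb{X}(4)$ for every rank-five ternary cubic, and then invokes Proposition \ref{propsamesupport} to recognize it as a WHSC; you instead rebuild the existence statement from scratch. Your skeleton is sound: the reduction of $I(\mathbb{X}(4))\subseteq F^{\perp}$ to the vanishing of the trilinear polarization on the four triples of coefficient points, the identification of rank-five ternary cubics with conic-plus-tangent-line and the normal form $F_0=x_2(x_0x_2-x_1^2)$, and, most importantly, the observation that the WHSC condition $V(L_i)\cap\Delta_0=\emptyset$ of Proposition \ref{propsamesupport} is not projectively invariant, so that after normalizing $F$ one must still show the tetrads move enough to avoid an arbitrary triangle of lines. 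That last point is exactly what the paper never has to confront, because \cite{BC} treats the given cubic directly.

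However, as written the argument has a genuine gap at the step you yourself call its heart. First, the existence of even one polar tetrad of $F_0$ that is in general position is never established: the sequential construction (choose $A_1,A_2$, put $A_3$ on the polar line $\widetilde F(A_1,A_2,\cdot)=0$ subject to concurrency of the three polar lines, let $A_4$ be their common point) could a priori degenerate for the reducible, singular cubic $F_0$ --- the concurrency condition on $A_3$, a polynomial condition on a $\PP^1$, might be of degree zero or force collinear configurations --- and you only promise the verifying computation rather than exhibit a tetrad (compare Example \ref{nicex}, where the paper does carry out the analogous explicit computation for the rank-four monomial $x_0x_1x_2$). Second, the transfer argument needs a single irreducible component of $\mathcal{W}_{F_0}$ on which all four projections $\tau\mapsto A_i$ are dominant; $S_4$-symmetry only permutes components, so dominance of $\tau\mapsto A_1$ on one component yields dominance of $\tau\mapsto A_{\sigma(1)}$ on its image component, not of all four projections on the same one. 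Your construction does give $\pi_1$ and $\pi_2$ dominant on the component swept out by generic $(A_1,A_2)$, but $\pi_3,\pi_4$ on that same component still require an argument (or an irreducibility statement for $\mathcal{W}_{F_0}$). Until the explicit tetrad and this component bookkeeping are supplied, what you have is a correct plan rather than a proof.
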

\begin{proof}
By \cite[Proposition 4.4]{BC} and Proposition \ref{propsamesupport}, the proof is done.
\end{proof}

\end{document}